\documentclass[10pt, a4paper, reqno, english]{amsart}

\usepackage{hyperref}
\usepackage{amssymb}
\usepackage[all]{xy}

\newcommand\ZZ{\mathbb{Z}}
\newcommand\CC{\mathbb{C}}
\newcommand\RR{\mathbb{R}}
\newcommand\PP{\mathbb{P}}
\newcommand\QQ{\mathbb{Q}}
\newcommand\xx{\mathbf{x}}
\newcommand{\Aone}{{\mathbf A}_1}
\newcommand{\Atwo}{{\mathbf A}_2}

\newcommand{\Afive}{{\mathbf A}_5}
\newcommand{\Dfour}{{\mathbf D}_4}
\newcommand{\Dfive}{{\mathbf D}_5}
\newcommand{\Esix}{{\mathbf E}_6}
\newcommand{\tS}{{\widetilde S}}
\newcommand{\abs}[1]{\left\|#1\right\|_\infty}
\newcommand{\ee}{\boldsymbol{\eta}}
\newcommand{\e}{\eta}
\newcommand\dd{\,\mathrm{d}}
\newcommand{\ex}[1]{*+<5pt>[o][F]{E_{#1}}}
\newcommand{\li}[1]{*+<3pt>[F]{E_{#1}}}
\newcommand{\classrep}{\mathcal{C}}
\newcommand{\CCC}{\mathcal{C}}
\newcommand{\classtuple}{\mathbf{C}}
\newcommand{\OO}{\mathcal{O}}
\newcommand{\eI}{I}
\newcommand{\eII}{\mathbf{I}}
\newcommand{\N}{\mathfrak{N}}
\newcommand{\id}[1]{\mathfrak{#1}}
\newcommand{\aaa}{\id a}
\newcommand{\bbb}{\id b}
\newcommand{\ddd}{\id d}
\newcommand{\qqq}{\id q}
\newcommand{\DD}{\mathcal{D}}
\newcommand{\II}{\mathcal{I}}
\newcommand{\p}{\id p}
\newcommand{\kc}{\id{k}_\id{c}}
\newcommand{\Ao}{\Pi_1}
\newcommand{\At}{\Pi_2}

\newcommand\rto{\dashrightarrow}
\newcommand{\dual}[1]{#1^*}
\newcommand{\dualtr}[1]{#1^\vee}
\newcommand{\ip}[2]{\langle #1,#2\rangle}
\newcommand{\ft}[1]{\widehat{#1}}
\newcommand{\numunits}{|\OO_K^\times|}

\DeclareMathOperator{\Tr}{Tr}

\newtheorem{theorem}{Theorem}
\newtheorem{lemma}[theorem]{Lemma}
\newtheorem{prop}[theorem]{Proposition}
\theoremstyle{definition}
\newtheorem*{ack}{Acknowledgements}

\numberwithin{theorem}{section}
\numberwithin{equation}{section}

\begin{document}

\setcounter{tocdepth}{1}

\title[On Manin's conjecture for a certain singular cubic surface]{On Manin's
  conjecture for a certain singular cubic surface over imaginary quadratic
  fields}

\author{Ulrich Derenthal}

\address{Mathematisches Institut, Ludwig-Maximilians-Universit\"at M\"unchen,
  Theresienstr. 39, 80333 M\"unchen, Germany}

\email{ulrich.derenthal@mathematik.uni-muenchen.de}

\author{Christopher Frei}

\email{frei@math.lmu.de}

\date{January 22, 2014}

\begin{abstract}
  We prove Manin's conjecture over imaginary quadratic number fields
  for a cubic surface with a singularity of type $\Esix$.
\end{abstract}

\subjclass[2010] {11D45 (14G05)}

% 11D45: Number theory->Diophantine equations->Counting solutions of
% Diophantine equations
%
% 14G05: Algebraic geometry->Arithmetic problems. Diophantine
% geometry->Rational points

\maketitle

\tableofcontents

\section{Introduction}

Central questions in the arithmetic of cubic surfaces over number fields are
the existence and distribution of rational points on them. It is known that the
Hasse principle holds for singular cubic surfaces over number fields
\cite{MR0073643}, but may fail for smooth cubic surfaces over $\QQ$ \cite[\S
2]{MR0139989}. Weak approximation may fail for smooth and singular cubic
surfaces; see \cite[\S 3]{MR0139989} for the first singular example over
$\QQ$. Failures of the Hasse principle and weak approximation are explained by
Brauer--Manin obstructions \cite{MR0427322} in all known examples, but it
remains open whether the Brauer--Manin obstruction to the Hasse principle and
weak approximation is the only one for all cubic surfaces.

Manin's conjecture makes a precise prediction of the quantitative
behavior of rational points on cubic surfaces and, more generally,
Fano varieties. In fact, numerical experiments for a cubic surface
\cite[Appendix]{MR89m:11060} played a central role in Manin's first
formulation of this conjecture. For a cubic surface $S \subset
\PP^3_K$ containing at least one rational point over a number field
$K$, Manin's conjecture predicts that the number of rational points of
Weil height $H(\xx)$ bounded by $B$
\begin{equation*}
  N_{U,H}(B) := |\{\xx \in U(K) \mid H(\xx) \leq B\}|
\end{equation*}
on the complement $U$ of the lines in $S$ behaves asymptotically, for $B \to
\infty$, as
\begin{equation*}
  N_{U,H}(B) = c_{S, H} B(\log B)^{\rho-1}(1+o(1)),
\end{equation*}
where $\rho$ is the rank of the Picard group of (the minimal desingularization
of) $S$, and the leading constant $c_{S, H} > 0$ has an explicit
interpretation due to Peyre \cite{MR1340296, MR2019019} and
Batyrev--Tschinkel \cite{MR1679843}.

Central techniques to prove Manin's conjecture are harmonic analysis
in case of varieties that are equivariant compactifications of
algebraic groups \cite{MR1620682, MR1906155}, the circle method in
case of varieties defined by forms in many variables \cite{MR0150129,
  MR1340296}, and universal torsors \cite{MR89f:11082, MR1679841} for
varieties without such a special structure. For cubic surfaces, one
may alternatively use conic fibrations to parameterize rational
points.

For many years the only cubic surfaces for which Manin's conjecture could be
proved were forms of the cubic surface defined by the
equation
\begin{equation}\label{eq:def_3A2}
  x_0^3=x_1x_2x_3,
\end{equation}
of singularity type $3\Atwo$ (as special cases of \cite{MR1620682,
 MR1679841}; see \cite{MR3107569} for references to articles giving
other proofs of Manin's conjecture for this particular surface). The reason
is that it is toric, so that on the one hand, it is accessible to the harmonic
analysis method, and on the other hand, its universal torsor is particularly
simple \cite{MR95i:14046}.

However, no other cubic surface is an equivariant compactification of an
algebraic group \cite{MR2753646, arXiv:1212.3518}. Furthermore, cubic
surfaces are clearly out of reach of the circle method. The conic fibration
approach gave the best available upper bounds $N_{U,H}(B) \ll
B^{4/3+\epsilon}$ for certain smooth cubic surfaces over $\QQ$
\cite{MR98h:11083}. The universal torsor approach led to the proof of Manin's
conjecture over $\QQ$ for the cubic surface defined by
\begin{equation}\label{eq:def_E6}
  x_0^2x_2+x_1x_2^2+x_3^3 = 0
\end{equation}
with an $\Esix$ singularity \cite{MR2332351}, based on the computation
of its Cox ring \cite{MR2029868} and analytic number theory. Proofs of
Manin's conjecture over $\QQ$ via universal torsors for several
further cubic surfaces followed, overcoming new obstacles in each case
(singularity types $\Dfive$ \cite{MR2520769}, $2\Atwo+\Aone$
\cite{MR2990624}, $\Afive+\Aone$ \cite{arXiv:1205.0373}, $\Dfour$
\cite{arXiv:1207.2685}).

Recently, we started to generalize the universal torsor approach from $\QQ$ to
other number fields. This is inspired by the work of Schanuel \cite{MR557080},
which can be interpreted as the proof of Manin's conjecture for projective
spaces over arbitrary number fields via universal torsors.

Our first step was to revisit the toric singular cubic surface defined by
(\ref{eq:def_3A2}) using universal torsor techniques (with Janda
\cite{arXiv:1105.2807} over imaginary quadratic fields of class number $1$,
\cite{MR3107569} over arbitrary number fields).

Our second step was to go beyond toric varieties over imaginary quadratic
fields. The testing ground were certain del Pezzo surfaces of higher degree
(just as over $\QQ$, where the investigation of Manin's conjecture beyond
equivariant compactifications of algebraic groups and forms in many variables
started with smooth quintic \cite{MR1909606} and singular quartic
\cite{MR2373960} del Pezzo surfaces). In \cite{arXiv:1302.6151}, we developed
the necessary techniques over imaginary quadratic fields in some generality
and applied them to a first example, and in \cite{arXiv:1304.3352} we showed
that they apply to some other singular quartic del Pezzo surfaces. While our
general techniques apply in principle to del Pezzo surfaces of arbitrary
degree, they do not provide sufficiently strong bounds for the error terms to
prove Manin's conjecture for any cubic surface.

Our third step is to prove here that Manin's conjecture holds over an
arbitrary imaginary quadratic field $K$ for the cubic surface $S \subset
\PP^3_K$ of type $\Esix$ defined by (\ref{eq:def_E6}),
continuing the investigations from \cite{MR2332351}.

\begin{theorem}\label{thm:main}
  Let $K \subset \CC$ be an imaginary quadratic field.  Let $S$ be the
  cubic surface over $K$ of type $\Esix$ defined by (\ref{eq:def_E6}), and let $U$ be
  the complement of the line $L = \{x_2=x_3=0\}$ in it.  For $B \geq
  3$, we have
  \begin{equation*}
    N_{U,H}(B) = c_{S, H} B(\log B)^6 + O(B(\log B)^5\log \log B),
  \end{equation*}
  where $H$ is the usual exponential Weil height, and
  \begin{equation*}
    c_{S,H} :=\frac{1}{6220800}
    \cdot \frac{(2\pi)^7 h_K^7}{\sqrt{|\Delta_K|}^9\numunits^7}\cdot 
    \prod_\p \left(1-\frac{1}{\N\p}\right)^7\left(1+\frac{7}{\N\p}+\frac{1}{\N\p^2}\right)
    \cdot \omega_{\infty}(\tS)\text.
  \end{equation*}
  Here, $\Delta_K$ is the discriminant, $h_K$ the class number, $\numunits$ the
  number of units in the ring of integers $\OO_K$ of $K$, $\p$ runs over
  all nonzero prime ideals of $\OO_K$, and $\N\p$ is the absolute norm of $\p$, while
  \begin{equation*}
      \omega_\infty(\tS):=\frac{12}{\pi}\int_{\abs{z_0z_1^2},\abs{z_0^2z_1+z_2^3}, \abs{z_1^3}, \abs{z_1^2z_2} \le 1} \dd z_0 \dd z_1 \dd z_2
  \end{equation*}
  is a complex integral, with bounds defined via $\abs{z}=z\overline{z}$ for $z
  \in \CC$.
\end{theorem}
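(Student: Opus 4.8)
The plan is to prove Theorem~\ref{thm:main} by the universal torsor method in the form developed over imaginary quadratic fields in \cite{arXiv:1302.6151} (and applied to the toric cubic surface (\ref{eq:def_3A2}) in \cite{MR3107569}), importing the analytic input from the treatment of the same surface over $\QQ$ in \cite{MR2332351}. \textbf{Step 1: passage to the universal torsor.} Using the Cox ring of the minimal desingularization $\tS$ of $S$ computed in \cite{MR2029868}, I would first set up an explicit bijection between $U(K)$ and a set of integral points on a universal torsor over $\tS$. Since $\OO_K$ is in general not a principal ideal domain, a torsor coordinate is encoded by a nonzero integral ideal, chosen from a fixed system of $h_K$ ideal class representatives, together with a generator of an associated fractional ideal determined up to the $\numunits$ units of $\OO_K$; the intersection pattern of the negative curves on $\tS$ imposes coprimality conditions among these ideals, and the single relation in the Cox ring becomes the torsor equation, an equation over $\OO_K$. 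Under this bijection the height bound $H(\xx)\le B$ becomes the condition that the four forms $z_0z_1^2$, $z_0^2z_1+z_2^3$, $z_1^3$, $z_1^2z_2$ occurring in $\omega_\infty(\tS)$, evaluated on the torsor coordinates, all have complex absolute value at most $B$. Summing over the ideal class representatives and over generators modulo units accounts for the factors $h_K^7$ and $\numunits^{-7}$.

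\textbf{Step 2: the torsor equation.} The crucial and hardest step is the first summation, over the coordinate (or pair of coordinates) linked by the torsor equation, which has the typical shape of a vanishing sum of three monomials in the torsor coordinates. For fixed values of the coordinates not involved, the equation determines the remaining one (respectively constrains a pair) subject to a congruence modulo an ideal assembled from the others, and I would count the admissible values by lattice point counting in $\OO_K\subset\CC\cong\RR^2$: the leading term is a real volume -- a local density at the archimedean place, contributing later to $\omega_\infty(\tS)$ -- and the error must be held below the trivial bound uniformly in all the fixed coordinates. This is where essentially all the difficulty lies: one needs imaginary quadratic analogues of the estimates used over $\QQ$ in \cite{MR2332351} (divisor-type bounds for ideals, bounds of Dirichlet-series type, and a hyperbola device over $\CC$), combined with a careful truncation of the ranges of the already-summed coordinates, and the resulting saving must be strong enough to survive the remaining summations.

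\textbf{Step 3: the remaining summations.} With the torsor equation resolved, I would sum over the remaining torsor coordinates one at a time, in a carefully chosen order, removing the coprimality conditions by M\"obius inversion over ideals and estimating at each stage a sum of a multiplicative arithmetic function over a region in $\OO_K$ by means of the lattice point and multiplicative summation lemmas of \cite{arXiv:1302.6151} together with partial summation. Each such step contributes a convergent local factor, and these accumulate to the Euler product $\prod_\p(1-1/\N\p)^7(1+7/\N\p+1/\N\p^2)$, as well as a real volume, so that the volumes from Steps 2 and 3 assemble into $\omega_\infty(\tS)$ and the number of summations producing a logarithm works out to the predicted power $(\log B)^6=(\log B)^{\rho-1}$. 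The bookkeeping needed to confirm that the accumulated error stays within $O(B(\log B)^5\log\log B)$ is delicate, the $\log\log B$ factor arising from one of the intermediate estimates being only almost optimal.

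\textbf{Step 4: the leading constant.} Finally I would collect all the factors produced above and match them against Peyre's conjectural constant $\alpha(\tS)\,\beta(\tS)\,\tau_H(\tS)$ for $\tS$: the rational factor $\frac{1}{6220800}$ packages $\alpha(\tS)$, the volume attached to the effective cone of $\tS$, together with elementary normalization constants; $\beta(\tS)=1$ because the geometric Picard group of $\tS$ is a trivial Galois module; the archimedean density equals $\omega_\infty(\tS)$; the Euler product is the product of the finite local densities with their convergence factors; and the remaining global factor $\frac{(2\pi)^7h_K^7}{\sqrt{|\Delta_K|}^9\numunits^7}$ combines the $h_K$ ideal classes and the $\numunits$ units from the $\rho=7$ torus directions of the torsor with the covolume $\sqrt{|\Delta_K|}/2$ of $\OO_K$ in $\CC$ entering the $9=\dim(\text{torsor})$ lattice point counts. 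As over $\QQ$, I expect Step 2 to be the main obstacle.
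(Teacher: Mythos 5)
Your outline follows the same overall architecture as the paper (parameterization by integral points on a universal torsor with ideal-class representatives and units accounting for $h_K^7/\numunits^7$, successive summations with M\"obius inversion over ideals, and assembly of $\alpha(\tS)$, the Euler product and $\omega_\infty(\tS)$ at the end), but there is a genuine gap exactly at the point you yourself flag as the main obstacle. After the summation tied to the torsor equation, the next coordinate ($\e_8$ in the paper's notation) enters through a congruence of the form $\rho^2\equiv \e_8 A$ modulo an ideal $\qqq$ built from the already-fixed coordinates, so what must be counted is, for each $\qqq$, the number of lattice points $z\in\aaa$ in a disc with $z\equiv\alpha\rho^2\bmod\qqq$, \emph{summed over all invertible residues} $\rho$ modulo $\qqq$. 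Treating each residue class separately, as your ``lattice point count with an error held below the trivial bound'' would do, yields a total error $O((t\N\qqq/\N\aaa)^{1/2}+\N\qqq)$, and the paper states explicitly that this is insufficient: indeed the general machinery of \cite{arXiv:1302.6151}, which is what such a count gives, is expressly said not to reach any cubic surface. The missing idea is the paper's Theorem \ref{thm:circle_problem}: a circle-problem estimate in which the center runs over $\alpha\rho^2$ for quadratic residues modulo $\qqq$, with error $O((t/\N\aaa)^{1/3}\N\qqq^{1/3+\epsilon}+2^{\omega_K(\qqq)}\N\qqq^{1/2})$, proved by Poisson summation over the lattice $\aaa\qqq$ together with bounds for quadratic (Gau\ss-type) exponential sums over $K$ adapted from \cite{MR0047697}; the essential feature is cancellation not only in the radius $t$ but also in $\N\qqq$, coming from the $\rho$-average.

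Your proposed substitute --- ``imaginary quadratic analogues of the estimates used over $\QQ$ in \cite{MR2332351}: divisor-type bounds, Dirichlet-series bounds, and a hyperbola device'' --- does not supply this. Those tools transfer to $\OO_K$ via \cite{arXiv:1302.6151} and do handle the first summation and the later summations, but they give no cancellation in the sum over the quadratic residues $\rho$, and the two-dimensional (circle-problem) nature of the count over $\OO_K\subset\CC$ makes the per-class error relatively larger than in the one-dimensional situation over $\QQ$, so the deficit cannot be absorbed by truncation or by the remaining summations; the resulting error would swamp $B(\log B)^5\log\log B$ and even the main term in parts of the range. So while Steps 1, 3 and 4 of your plan match the paper (and your constant bookkeeping in Step 4 is consistent with it), Step 2 as written would fail without the new exponential-sum/Poisson-summation input, which is precisely the paper's novel contribution.
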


The implied constant in the error term is allowed to depend on $K$. Theorem
\ref{thm:main} agrees with Manin's conjecture since $S$ is split over $K$,
hence its minimal desingularization $\tS$ has a Picard group of rank $7$.

We present the proof of Theorem~\ref{thm:main} in Sections
\ref{sec:passage}--\ref{sec:proof_main}: First, the rational points of
bounded height on $U$ are parameterized by integral points on
universal torsors over $\tS$, subject to some coprimality- and height
conditions. Then, these points on the universal torsors are counted by
means of analytic number theory, in several summations, one for each
coordinate. The main challenge here is the treatment of the error
terms.

One key analytic ingredient is the following variant of the Gau\ss\ circle
problem where, in addition, the center is summed over quadratic residues modulo
an ideal $\qqq$ of $\OO_K$. This is proved in Section \ref{sec:circle_problem}
and may be of independent interest. Let $\omega_K$ and $\phi_K$ denote the
prime divisor function and Euler's $\phi$-function on the nonzero ideals of
$\OO_K$. Moreover, we write $\phi^*_K(\aaa):=\phi_K(\aaa)/\N\aaa$.

\begin{theorem}\label{thm:circle_problem}
  Let $\aaa$, $\qqq$ be nonzero ideals of $\OO_K$, $\alpha \in \OO_K$ with
  $\aaa + \qqq = \alpha\OO_K + \qqq = \OO_K$, and $\epsilon>0$. Then, for $t
  \geq 0$,
  \begin{equation*}
    \sum_{\substack{\rho \bmod \qqq\\\rho\OO_K + \qqq =
        \OO_K}}\sum_{\substack{z \in \aaa\\z \equiv \alpha\rho^2 \bmod
        \qqq\\\abs{z}\leq t}}1 =
    \frac{2\pi\phi_K^*(\qqq)}{\sqrt{|\Delta_K|}\N\aaa}t +
    O_\epsilon\left(\left(\frac{t}{\N\aaa}\right)^{1/3}\N\qqq^{1/3+\epsilon} +
      2^{\omega_K(\qqq)}\N\qqq^{1/2}\right).
  \end{equation*}
 Here, $\rho$ runs over a reduced residue system of $\OO_K$ modulo $\qqq$.
\end{theorem}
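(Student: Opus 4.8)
The plan is to treat the inner sum over $z$ as a lattice-point count inside a disc of radius $\sqrt{t}$ (under the absolute value $\abs{z}=z\overline{z}$) for the lattice $\aaa$, shifted by the fixed residue class $z\equiv\alpha\rho^2\bmod\qqq$; then sum this count over the $\phi_K(\qqq)$ admissible values of $\rho$. First I would expand the characteristic function of the congruence condition $z\equiv\alpha\rho^2\bmod\qqq$ using additive characters modulo $\qqq$: writing $\psi$ for a character of $\aaa/\qqq\aaa$ (equivalently, via the trace pairing and the different, additive characters of the relevant quotient), one gets
\begin{equation*}
  \sum_{\substack{z\in\aaa\\z\equiv\alpha\rho^2\bmod\qqq\\\abs{z}\le t}}1
  = \frac{1}{\N\qqq}\sum_{\psi}\overline{\psi(\alpha\rho^2)}\sum_{\substack{z\in\aaa\\\abs{z}\le t}}\psi(z)\text.
\end{equation*}
The principal character contributes the main term: a standard lattice-point estimate for $\aaa$ (covolume $\sqrt{|\Delta_K|}\,\N\aaa/2$ in $\CC\cong\RR^2$ with the Euclidean structure coming from $\abs{\cdot}$) gives $\frac{2\pi}{\sqrt{|\Delta_K|}\,\N\aaa}t+O((t/\N\aaa)^{1/2})$; after multiplying by $\phi_K(\qqq)/\N\qqq=\phi_K^*(\qqq)$ from the $\rho$-sum over units mod $\qqq$ and dividing by $\N\qqq$, this yields exactly the claimed main term $\frac{2\pi\phi_K^*(\qqq)}{\sqrt{|\Delta_K|}\,\N\aaa}t$, with the crude error $O((t/\N\aaa)^{1/2}\phi_K^*(\qqq))$ already acceptable if $t/\N\aaa$ is not too large, but in general we need the better exponent $1/3$, so I would instead keep the van der Corput-type exponent sum estimate for the disc in play rather than the trivial boundary bound, getting an exponent $1/3$ in $t/\N\aaa$ at the cost of a power of $\N\qqq$ from the lattice scaling.

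The genuinely new feature, and the step I expect to be the main obstacle, is the contribution of the nonprincipal characters. For each nonprincipal $\psi$, the inner sum $\sum_{z\in\aaa,\abs{z}\le t}\psi(z)$ is an incomplete exponential sum over a two-dimensional lattice against a linear phase, and over the disc it is $O((t/\N\aaa)^{1/3}\N\qqq^{?})$ by the two-dimensional van der Corput / Poisson-summation bound — but we then have to sum $\overline{\psi(\alpha\rho^2)}$ over the roughly $\N\qqq$ characters \emph{and} over $\rho$. The key point is that summing first over $\rho$ in a reduced residue system produces a \textbf{Gauss sum} (more precisely a Gauss-type sum over $\OO_K/\qqq$ twisted by the quadratic character $\rho\mapsto\rho^2$ in the argument), so
\begin{equation*}
  \sum_{\substack{\rho\bmod\qqq\\\rho\OO_K+\qqq=\OO_K}}\overline{\psi(\alpha\rho^2)}
\end{equation*}
is, by multiplicativity in $\qqq$ and the standard evaluation of Gauss sums over $\OO_K/\p^k$, bounded by $2^{\omega_K(\qqq)}\N\qqq^{1/2}$ in absolute value (and vanishes unless the conductor of $\psi$ is all of $\qqq$, in the squarefree case). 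This square-root cancellation in the $\rho$-sum is exactly what converts the naive bound $\N\qqq\cdot\N\qqq^{1/2}\cdot(\ldots)$ into the displayed $2^{\omega_K(\qqq)}\N\qqq^{1/2}$ term.

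Putting the pieces together: the nonprincipal characters contribute
\begin{equation*}
  \frac{1}{\N\qqq}\sum_{\psi\ne\psi_0}\Bigl|\sum_{\rho}\overline{\psi(\alpha\rho^2)}\Bigr|\cdot\Bigl|\sum_{\substack{z\in\aaa\\\abs{z}\le t}}\psi(z)\Bigr|
  \ll \frac{1}{\N\qqq}\cdot 2^{\omega_K(\qqq)}\N\qqq^{1/2}\cdot\Bigl(\text{number of relevant }\psi\Bigr)\cdot\max_\psi\Bigl|\sum_{z}\psi(z)\Bigr|\text,
\end{equation*}
and balancing the trivial bound $\max_\psi|\sum_z\psi(z)|\ll t/\N\aaa$ (valid for a single fixed $\psi$, even if there are $\asymp\N\qqq$ of them, but then one needs the large-sieve-type or second-moment average over $\psi$ to avoid losing a full power of $\N\qqq$) against the oscillatory bound $\ll(t/\N\aaa)^{1/3}\N\qqq^{c}$ gives the stated error $O_\epsilon((t/\N\aaa)^{1/3}\N\qqq^{1/3+\epsilon}+2^{\omega_K(\qqq)}\N\qqq^{1/2})$; the $\epsilon$ absorbs divisor-function factors $2^{\omega_K(\qqq)}\ll_\epsilon\N\qqq^\epsilon$ arising in the exponent-sum estimates and in counting the characters of full conductor. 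The delicate bookkeeping is: (i) reducing to primitive $\psi$ so that the Gauss-sum evaluation applies cleanly, (ii) using the right two-dimensional lattice exponent-sum bound (this is where the $1/3$ comes from, analogous to the classical $t^{1/3}$ in the Gauss circle problem), and (iii) keeping track of how the scaling of the lattice $\aaa$ by $\qqq$-torsion inflates the error by a controlled power of $\N\qqq$. I would carry these out in the order: character expansion, main term, Gauss-sum bound for the $\rho$-sum, lattice exponent-sum bound for the $z$-sum, and finally combine.
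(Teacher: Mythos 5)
Your overall strategy---expanding the congruence $z\equiv\alpha\rho^2\bmod\qqq$ in additive characters, extracting the main term from the trivial frequency, getting square-root cancellation in the $\rho$-sum from quadratic Gauss sums, and aiming for a circle-problem exponent $1/3$ in the $z$-sum---is essentially the paper's route (the paper runs Poisson summation over the lattice $\aaa\qqq$ with a smoothed indicator, which amounts to the same expansion). However, two quantitative steps that actually carry the proof are missing or incorrect. First, your Gauss-sum claim fails for imprimitive frequencies: the sum $\sum_{\rho}\overline{\psi(\alpha\rho^2)}$ over a reduced residue system does not vanish when the conductor of $\psi$ is a proper divisor of $\qqq$, and it is not $O(2^{\omega_K(\qqq)}\N\qqq^{1/2})$ in general; already over $\ZZ$ with $q=p^2$ and frequency $p$ the sum has size about $p^{3/2}=q^{3/4}$. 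The correct bound (Lemma \ref{lem:exp_sum_2}) is $\ll \N(w\qqq\DD_K+\qqq)^{1/2-\epsilon}\N\qqq^{1/2+2\epsilon}$, with a gcd-type factor that can be large, and the proof must then exploit that frequencies with gcd ideal $\bbb$ are constrained to the sparser lattice $\dualtr{(\aaa\qqq/\bbb)}$, so the loss $\N\bbb^{1/2}$ is offset by the reduced density of such $w$; without this bookkeeping the imprimitive terms destroy the claimed error.

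Second, the step where you ``balance the trivial bound against the oscillatory bound'' or invoke an unspecified large-sieve/second-moment over $\psi$ is precisely where the difficulty lies, and no mechanism is given: a maximum bound summed over the roughly $\N\qqq$ characters loses a full factor of $\N\qqq$, as you yourself note. What makes the frequency sum converge in the paper is sandwiching $\chi_{t^{1/2}D}$ between $\chi_{(t^{1/2}\pm\delta)D}*\psi_\delta$, so that after Poisson summation each nonzero frequency carries $\ft{\chi_D}((\sqrt{t}\pm\delta)w)\ll (\sqrt{t}\,|w|)^{-3/2}$ (Bessel decay) times $\ft{\psi}(\delta w)\ll\min\{1,(\delta|w|)^{-1}\}$, and then choosing $\delta=\N\qqq^{1/3}\N\aaa^{2/3}t^{-1/6}$ to balance the smoothing error $O(\sqrt{t}\,\delta/\N\aaa)$ at $w=0$ against the tail; this optimization is exactly where both exponents $1/3$ (in $t/\N\aaa$ and in $\N\qqq$) originate, and it also supplies the exponent $1/3$ for the principal character uniformly in the ideal lattice $\aaa$, which you cannot simply quote from the classical $\ZZ^2$ circle problem (your fallback $O((t/\N\aaa)^{1/2})$ is not acceptable). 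Finally, you never treat small $t$: the term $2^{\omega_K(\qqq)}\N\qqq^{1/2}$ in the error does not come from the Gauss sums but from the trivial estimate in the range $t\ll\sqrt{\N\qqq}\,\N\aaa$, where the smoothing argument (which needs $\delta<\sqrt{t}/2$) is unavailable.
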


Just summing the (naive) error terms of the inner sum over $\rho$ would yield
the total error $O((t\N\qqq/\N\aaa)^{1/2}+\N\qqq)$, which is insufficient for
our applications.  If $K=\QQ(i)$, $\qqq = \aaa = \ZZ[i]$, Theorem
\ref{thm:circle_problem} gives the Gau\ss\ circle problem with Sierpi\'nski's
\cite{Sierpinski1906} classical error term $O(t^{1/3})$ and an additional error
$O(1)$ to take care of small $t$. If $\qqq \neq \ZZ[i]$, $\alpha=1$, Theorem
\ref{thm:circle_problem} can be interpreted as counting only quadratic residues
modulo $\qqq$ in the circle problem.

To obtain a sufficiently strong error term, we use Poisson summation.
Additional difficulties arise from the fact that we do not only need
error cancellation in terms of the circle's radius, but also in terms
of the norm of $\qqq$. To this end, we estimate quadratic exponential
sums over $K$.  This new approach leads, in particular, to a crucial
improvement of the general treatment of the second summation in
\cite[Section 6]{arXiv:1302.6151}; see Section \ref{sec:second_sum}.

\subsection{Notation}
As in \cite{arXiv:1302.6151}, we use the following
notation. Let $\classrep$ be a fixed system of integral
representatives for the ideal classes of the ring of integers
$\OO_K$. The symbol $\p$ always denotes a nonzero prime ideal of
$\OO_K$, and products indexed by $\p$ are understood to run over all
such prime ideals. We say that $x \in K$ is \emph{defined}
(resp. \emph{invertible}) modulo an ideal $\aaa$ of $\OO_K$ if
$v_\p(x) \geq 0$ (resp. $v_\p(x) = 0$) for all $\p \mid \aaa$, where
$v_\p$ is the usual $\p$-adic valuation. For $x,y$ defined modulo
$\aaa$, we write $x \equiv_\aaa y$ if $v_\p(x-y)\geq v_\p(\aaa)$ for
all $\p \mid \aaa$.

We write $\II_K$ for the monoid of nonzero ideals of $\OO_K$ and $\mu_K$ for
the M\"obius function on $\II_K$. For a fractional ideal $\aaa$ of $\OO_K$, we
write $\aaa^{\neq 0} := \aaa \smallsetminus \{0\}$.

The implied constants in Vinogradov's $\ll$- and Landau's $O$-notation may
depend on $K$ and on $\epsilon > 0$. Additional
dependencies are indicated by appropriate subscripts.

\begin{ack}
  The first-named author was supported by grants DE\,1646/2-1 and DE\,1646/3-1
  of the Deutsche Forschungsgemeinschaft. The second-named author was
  partially supported by a research fellowship of the Alexander von Humboldt
  Foundation.
\end{ack}

\section{The circle problem}\label{sec:circle_problem}
In this section, we prove Theorem \ref{thm:circle_problem}. We
identify $\CC$ with $\RR^2$ and use the inner product
$\ip{a+bi}{c+di}:=ac+bd$. For a lattice $\Lambda \subset \CC$, we
denote its dual lattice with respect to $\ip{\cdot}{\cdot}$ by
\begin{equation*}
  \dual\Lambda = \{w \in \CC \mid \ip{v}{w} \in \ZZ \text{ for all }v \in \Lambda\}\text.
\end{equation*}
If $\Lambda \subset K$ then the dual lattice with respect to the trace pairing is denoted by
\begin{equation*}
  \dualtr\Lambda = \{w \in K \mid \Tr_{K|\QQ}(vw) \in \ZZ \text{ for all }v \in \Lambda\}\text.
\end{equation*}
Since $\ip{v}{w} = \Tr(v\overline{w}/2)$, we have $\dual\Lambda = 2
\overline{\dualtr{\Lambda}}$, and if $\Lambda = \qqq$ is a fractional ideal of
$K$ then $\dualtr\qqq$ and $\dual\qqq$ are as well, namely $\dualtr\qqq =
\qqq^{-1}\DD_K^{-1}$, $\dual\qqq = 2 \overline{\qqq^{-1}}\DD_K^{-1}$, where
$\DD_K$ denotes the different of $K$ (over $\QQ$). We will apply the Poisson
summation formula with respect to $\ip{\cdot}{\cdot}$ and estimate exponential sums
with respect to $\Tr(\cdot)$. The above paragraph shows how to translate
between the two pairings. 

The following two lemmas adapt the result of \cite{MR0047697} on
exponential sums over number fields to our needs.

\begin{lemma}\label{lem:exp_sum_1}
  Let $\qqq$ be a nonzero ideal of $\OO_K$ and let $w \in \dualtr\qqq$. Then, for $\epsilon
  > 0$,
  \begin{equation*}
    \sum_{\beta \bmod \qqq}e^{2\pi i \Tr(w\beta^2)} \ll \N(w\qqq\DD_K + \qqq)^{1/2-\epsilon}\N\qqq^{1/2+\epsilon}\text. 
  \end{equation*}
\end{lemma}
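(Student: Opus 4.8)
The plan is to reduce the quadratic Gauss sum over $\OO_K/\qqq$ to a product of Gauss sums over prime power moduli via the Chinese remainder theorem, and then to estimate each local factor. First I would write $\qqq = \prod_j \p_j^{e_j}$ and use the ring isomorphism $\OO_K/\qqq \cong \prod_j \OO_K/\p_j^{e_j}$; since $w \in \dualtr\qqq = \qqq^{-1}\DD_K^{-1}$, the exponential $\beta \mapsto e^{2\pi i\Tr(w\beta^2)}$ is a well-defined character-like function on $\OO_K/\qqq$, and it factors as a product over the $j$'s once one decomposes $w$ accordingly using $\dualtr\qqq = \bigoplus_j \dualtr{\p_j^{e_j}}$ locally. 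Thus $\sum_{\beta \bmod \qqq} e^{2\pi i \Tr(w\beta^2)} = \prod_j S_j$, where $S_j$ is the analogous sum modulo $\p_j^{e_j}$ with the local component $w_j$ of $w$.

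Next I would estimate each $S_j$. Write $v_{\p_j}(w) + v_{\p_j}(\DD_K) =: -m_j$, so that the "true modulus" of the $j$-th quadratic sum is $\p_j^{m_j}$ with $0 \le m_j \le e_j$; equivalently, $\N(w\qqq\DD_K + \qqq) = \prod_j \N\p_j^{e_j - m_j}$ measures the total defect between $\qqq$ and the effective modulus. For $m_j \le 0$ the sum $S_j$ is simply $\N\p_j^{e_j}$ (the summand is constant). For $m_j \ge 1$, a standard evaluation of quadratic Gauss sums over the local ring $\OO_K/\p_j^{e_j}$ — completing the square, which works cleanly when $\p_j$ is odd, with the usual extra care for $\p_j \mid 2$ — gives $|S_j| \ll \N\p_j^{e_j} \cdot \N\p_j^{-m_j/2} \cdot (\text{small factor})$, where the small factor absorbs ramification at $2$ and is $O(\N\p_j^{\epsilon e_j})$ at worst. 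This is exactly the shape one needs: collecting over $j$ yields
\begin{equation*}
  \left|\sum_{\beta \bmod \qqq}e^{2\pi i \Tr(w\beta^2)}\right| \ll \prod_j \N\p_j^{e_j}\N\p_j^{-m_j/2 + \epsilon e_j} = \N\qqq^{1/2+\epsilon}\prod_j \N\p_j^{(e_j-m_j)/2} \cdot \N\qqq^{1/2} / \N\qqq^{1/2},
\end{equation*}
which after regrouping is $\N(w\qqq\DD_K+\qqq)^{1/2-\epsilon}\N\qqq^{1/2+\epsilon}$, as claimed. (The $-\epsilon$ in the first exponent and $+\epsilon$ in the second are there precisely to swallow the divisor-function-type losses $2^{\omega_K(\qqq)}$ and the ramification constants at $2$.)

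The main obstacle I anticipate is the careful evaluation of the local quadratic Gauss sum when $\p_j$ lies above $2$: completing the square is unavailable there, so one must argue more directly — e.g. splitting $\beta$ according to a coset decomposition modulo a suitable power of $\p_j$ and exploiting that $\beta \mapsto \beta^2$ becomes linear on each small coset, or else citing the relevant case analysis from \cite{MR0047697}. A secondary technical point is bookkeeping: translating the duality condition $w \in \dualtr\qqq$ into the clean local statement $v_{\p_j}(w\DD_K) \ge -e_j$, and checking that $\N(w\qqq\DD_K + \qqq)$ really equals $\prod_j \N\p_j^{\max(0,\,e_j+v_{\p_j}(w\DD_K))}$, so that the final regrouping of exponents is exactly right. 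Once the odd-prime case is in hand, the even case and the assembly are routine, and the statement follows.
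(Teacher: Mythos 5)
Your proposal is correct in substance but takes a genuinely different route from the paper. The paper's proof is a short global reduction: setting $\ddd := w(\dualtr\qqq)^{-1}+\qqq = w\qqq\DD_K+\qqq$, one checks that the summand is periodic modulo $\qqq\ddd^{-1}$, so the sum equals $\N\ddd$ times a complete quadratic sum to the modulus $\qqq\ddd^{-1}$ whose parameter is now primitive (the numerator and denominator of $w\DD_K$ are coprime), and that primitive sum is bounded by $\N(\qqq\ddd^{-1})^{1/2+\epsilon}$ by a direct citation of \cite[Theorem 1]{MR0047697}; no local analysis, and in particular no separate treatment of primes above $2$, is needed. You instead factor the sum by the Chinese remainder theorem (which does work: the cross terms lie in $\qqq$ and hence have integral trace against $w$) and estimate each local quadratic Gauss sum by hand; this essentially reproves the cited theorem, and the step you yourself flag as delicate, the dyadic primes, is exactly what the citation would spare you. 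What your route buys, if carried out, is a marginally stronger bound $\ll\N(w\qqq\DD_K+\qqq)^{1/2}\N\qqq^{1/2}$, since the loss at primes above $2$ is $O_K(1)$ rather than $\N\qqq^{\epsilon}$. Two bookkeeping points to repair in a write-up: the exponent of $\p_j$ in $\N(w\qqq\DD_K+\qqq)$ is $e_j+\min(0,v_{\p_j}(w\DD_K))$, not $\max(0,e_j+v_{\p_j}(w\DD_K))$ (these differ when $w\DD_K$ is integral at $\p_j$, where your convention $m_j\le 0$ should be replaced by $\max(m_j,0)$ throughout); and your final display, which carries the gratuitous $\N\qqq^{\epsilon}$ loss, yields the stated inequality only after invoking $\N(w\qqq\DD_K+\qqq)\le\N\qqq$ to move the $\epsilon$ between the two factors and renaming $\epsilon$.
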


\begin{proof}
  Let $\aaa := w (\dualtr{\qqq})^{-1}$ and $\ddd :=
  \aaa+\qqq$. Then all $\beta \in \qqq\ddd^{-1}$ satisfy
  \begin{equation*}
    w \cdot \beta^2 \in \aaa \dualtr{\qqq}\cdot\qqq\ddd^{-1} =
    (\aaa\ddd^{-1})\dualtr{\qqq}\qqq \subset \DD_K^{-1} = \dualtr{\OO_K}\text, 
  \end{equation*}
  so $\Tr(w\beta^2) \in \ZZ$. Therefore, the summand
  depends only on $\beta \bmod \qqq\ddd^{-1}$ and we obtain
  \begin{equation*}
   \sum_{\beta \bmod \qqq}e^{2\pi i \Tr(w\beta^2)}=\N\ddd\sum_{\beta\bmod\qqq\ddd^{-1}}e^{2\pi i\Tr(w\beta^2)}\text.
  \end{equation*}
  Now $w\DD_K = \aaa\qqq^{-1} = \aaa\ddd^{-1}/(\qqq\ddd^{-1})$. Since
  the numerator and denominator of this expression are relatively prime, we can
  apply \cite[Theorem 1]{MR0047697} to obtain the upper bound
  \begin{equation*}
    \ll
    \N\ddd\N(\qqq\ddd^{-1})^{1/2+\epsilon} =
    \N\ddd^{1/2-\epsilon}\N\qqq^{1/2+\epsilon}\text.\qedhere
  \end{equation*}
\end{proof}

\begin{lemma}\label{lem:exp_sum_2}
  Let $\qqq$ be a nonzero ideal of $\OO_K$ and $w \in \dualtr\qqq$. Then, for
  $\epsilon > 0$,
  \begin{equation*}
    \sum_{\substack{\beta \bmod \qqq\\\beta\OO_K + \qqq = \OO_K}}e^{2\pi i
      \Tr(w\beta^2)} \ll \N(w\qqq\DD_K +\qqq)^{1/2-\epsilon}\N\qqq^{1/2+2\epsilon}\text. 
  \end{equation*}
\end{lemma}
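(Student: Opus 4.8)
The plan is to reduce Lemma~\ref{lem:exp_sum_2} to Lemma~\ref{lem:exp_sum_1} by a standard M\"obius inversion to remove the coprimality condition $\beta\OO_K+\qqq=\OO_K$. Writing the indicator of coprimality as $\sum_{\ddd \mid \beta\OO_K+\qqq}\mu_K(\ddd)$ (more precisely, summing over ideals $\ddd$ dividing $\qqq$ with $\ddd \mid \beta$), we get
\begin{equation*}
  \sum_{\substack{\beta \bmod \qqq\\\beta\OO_K + \qqq = \OO_K}}e^{2\pi i \Tr(w\beta^2)} = \sum_{\ddd \mid \qqq}\mu_K(\ddd)\sum_{\substack{\beta \bmod \qqq\\ \ddd \mid \beta}}e^{2\pi i\Tr(w\beta^2)}\text.
\end{equation*}
In the inner sum substitute $\beta = \ddd_0\gamma$, where $\ddd_0$ is a generator-like representative — here one must be careful since $\ddd$ need not be principal, so instead I would argue ideal-theoretically: the set $\{\beta \bmod \qqq : \ddd \mid \beta\}$ is in bijection with $\qqq\ddd^{-1}$-classes scaled appropriately. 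Concretely, $\beta$ ranges over $\ddd/\qqq$ (a group of order $\N\qqq/\N\ddd$), and as $\beta$ runs over this, $\beta^2$ runs over squares in $\ddd^2/(\qqq\ddd)$ after a change of modulus. This lets me rewrite the inner sum as a sum of the same shape as in Lemma~\ref{lem:exp_sum_1}, but with $\qqq$ replaced by $\qqq\ddd^{-1}$ and $w$ replaced by $w\ddd^2$ (which still lies in $\dualtr{(\qqq\ddd^{-1})}$, since $w\ddd^2 \in \dualtr\qqq\cdot\ddd^2 = \qqq^{-1}\DD_K^{-1}\ddd^2 = (\qqq\ddd^{-1})^{-1}\DD_K^{-1}\ddd \subseteq \dualtr{(\qqq\ddd^{-1})}$).

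Next I would apply Lemma~\ref{lem:exp_sum_1} to each inner sum. The conductor-type quantity becomes $\N(w\ddd^2\cdot\qqq\ddd^{-1}\DD_K + \qqq\ddd^{-1}) = \N(w\qqq\ddd\DD_K + \qqq\ddd^{-1})$. Since $\ddd \mid \qqq$, one has $w\qqq\ddd\DD_K + \qqq\ddd^{-1} \supseteq w\qqq\DD_K + \qqq$ up to bounded index, so its norm is $\ll \N(w\qqq\DD_K+\qqq)$ — more carefully, I would bound $\N(w\qqq\ddd\DD_K+\qqq\ddd^{-1}) \le \N(w\qqq\DD_K+\qqq)\cdot\N\ddd^{?}$ and absorb the $\ddd$-dependence into the divisor sum. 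Thus each term contributes
\begin{equation*}
  \ll \N(w\qqq\DD_K+\qqq)^{1/2-\epsilon}\N(\qqq\ddd^{-1})^{1/2+\epsilon}\cdot(\text{powers of }\N\ddd)\text,
\end{equation*}
and summing $|\mu_K(\ddd)|$ times this over $\ddd \mid \qqq$ produces a factor bounded by $\sum_{\ddd\mid\qqq}\N\ddd^{O(\epsilon)} \ll \N\qqq^{\epsilon}$ (using the standard bound on the number of divisors of an ideal, $2^{\omega_K(\qqq)} \ll \N\qqq^\epsilon$). Collecting everything gives the claimed bound $\N(w\qqq\DD_K+\qqq)^{1/2-\epsilon}\N\qqq^{1/2+2\epsilon}$, where the extra $\epsilon$ in the exponent of $\N\qqq$ absorbs both the divisor sum and the slack in handling non-principal $\ddd$.

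The main obstacle I anticipate is the bookkeeping around non-principal ideals $\ddd$: the substitution $\beta \mapsto \ddd\gamma$ is cleanest when $\ddd$ is principal, and in general one has to either work with the quotient groups $\ddd/\qqq$ directly or localize prime-by-prime and use multiplicativity of the exponential sum over coprime ideal factors of $\qqq$. A prime-by-prime (i.e.\ multiplicative) approach is probably the cleanest: both the coprimality condition and the exponential sum factor through the Chinese Remainder Theorem decomposition of $\OO_K/\qqq$, so it suffices to prove the estimate when $\qqq = \p^k$ is a prime power, where the divisor sum has only two terms ($\ddd = \OO_K$ and $\ddd = \p$) and the substitution $\beta = \pi\gamma$ with $\pi$ a uniformizer is harmless. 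I would also need to verify the conductor comparison $\N(w\p^{k+1}\DD_K + \p^{k-1}) \ll \N(w\p^k\DD_K+\p^k)\cdot\N\p^{O(1)}$ in this local setting, which is a short valuation computation. Reassembling the prime-power estimates via multiplicativity then yields the lemma.
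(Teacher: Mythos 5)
Your proposal is correct and follows essentially the same route as the paper: M\"obius inversion over $\ddd \mid \qqq$, reduction of each inner sum to Lemma~\ref{lem:exp_sum_1} with modulus $\qqq\ddd^{-1}$, a conductor bound losing at most a factor $\N\ddd$, and absorption of the divisor sum into $\N\qqq^{\epsilon}$. The non-principality issue you flag is resolved in the paper exactly by your ``scaled bijection'': one picks $x_\ddd \in \ddd$ with $x_\ddd\ddd^{-1} + \qqq\ddd^{-1} = \OO_K$, so that multiplication by $x_\ddd$ gives an isomorphism $\OO_K/(\qqq\ddd^{-1}) \to \ddd/\qqq$, and then checks $\N(w x_\ddd^{2}\qqq\ddd^{-1}\DD_K + \qqq\ddd^{-1}) \le \N\ddd\,\N(w\qqq\DD_K + \qqq)$, which is your conductor comparison.
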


  \begin{proof}
    Let
    \begin{equation*}
      W:= \sum_{\substack{\beta \bmod \qqq\\\beta\OO_K + \qqq = \OO_K}}e^{2\pi i\Tr(w\beta^2)}
         = \sum_{\aaa \mid \qqq}\mu_K(\aaa)\sum_{\substack{\beta \bmod
          \qqq\\\beta \in \aaa}}e^{2\pi i\Tr(w\beta^2)}\text.
    \end{equation*}
    Let $x_\aaa \in \aaa$ such that $x_\aaa \aaa^{-1} + \qqq\aaa^{-1} =
    \OO_K$. Then $y+\qqq\aaa^{-1} \mapsto x_\aaa y + \qqq$ defines an isomorphism $\OO_K/(\qqq\aaa^{-1})
    \to \aaa/\qqq$. Hence, using Lemma \ref{lem:exp_sum_1},
    \begin{align*}
      W &= \sum_{\aaa \mid \qqq}\mu_K(\aaa)\sum_{\beta_0
        \bmod \qqq\aaa^{-1}}e^{2\pi i \Tr(w x_\aaa^2 \beta_0^2)}\\
      &\ll \sum_{\aaa\mid\qqq}|\mu_K(\aaa)|\N(w x_\aaa^2
      \qqq\aaa^{-1}\DD_K + \qqq\aaa^{-1})^{1/2-\epsilon}\N(\qqq\aaa^{-1})^{1/2+\epsilon}\text.
    \end{align*}
    Since $\N(w x_\aaa^2\qqq\aaa^{-1}\DD_K +
    \qqq\aaa^{-1})\leq\N\aaa^{-1}\N(x_\aaa^2\OO_K+\qqq)\N(w\qqq\DD_K + \qqq) \leq \N\aaa\N(w\qqq\DD_K+\qqq)$,
    we obtain 
    \begin{equation*}
      W \ll
      \N(w\qqq\DD_K+\qqq)^{1/2-\epsilon}\N\qqq^{1/2+\epsilon}\sum_{\aaa\mid\qqq}|\mu_K(\aaa)|\N\aaa^{-2\epsilon}
      \ll \N(w\qqq\DD_K+\qqq)^{1/2-\epsilon}\N\qqq^{1/2+2\epsilon}\text.\qedhere
    \end{equation*}
  \end{proof}

  \begin{lemma}\label{lem:param_ideal}
    Let $\aaa$ be a nonzero fractional ideal of $K$. Then there is an $\RR$-linear map
    $\varphi : \CC \to \CC$ with $\varphi(\ZZ[i]) = \aaa$ such that for
    all $v \in \RR^2$ we have 
    \begin{equation*}
      \N\aaa \abs{v}\ll\abs{\varphi(v)}\ll\N\aaa\abs{v}.
    \end{equation*}
  \end{lemma}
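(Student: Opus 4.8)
The plan is to reduce the statement to finitely many \emph{model} fractional ideals by means of the ideal class group, and then to twist by an element of $K^\times$, which acts on $\CC\cong\RR^2$ as a similarity whose scaling factor is controlled by the norm. The first (elementary but crucial) observation is that, since $K\subset\CC$ is imaginary quadratic, complex conjugation is the nontrivial element of $\mathrm{Gal}(K|\QQ)$; hence $N_{K|\QQ}(\gamma)=\gamma\overline\gamma=|\gamma|^2$ for every $\gamma\in K^\times$, so that the ideal norm of a principal fractional ideal $\gamma\OO_K$ equals $|\gamma|^2$.

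Next I would fix, once and for all, a finite system of integral ideals $\bbb_1,\dots,\bbb_{h_K}$ representing the ideal classes of $\OO_K$ (for instance $\classrep$), and for each $j$ a $\ZZ$-basis of $\bbb_j$. This produces $\RR$-linear isomorphisms $\varphi_j\colon\CC\to\CC$ sending $1$ and $i$ to the two chosen basis elements of $\bbb_j$, so that $\varphi_j(\ZZ[i])=\bbb_j$. Since there are only $h_K$ of them, a single constant $C=C(K)\ge 1$ satisfies $C^{-1}|v|\le|\varphi_j(v)|\le C|v|$ for every $j$ and every $v\in\RR^2$.

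Finally, given an arbitrary nonzero fractional ideal $\aaa$, I would write $\aaa=\gamma\bbb_j$, where $\bbb_j$ is the representative of the ideal class of $\aaa$ and $\gamma\in K^\times$ (so that $\aaa\bbb_j^{-1}=\gamma\OO_K$), and set $\varphi:=\gamma\cdot\varphi_j$, i.e.\ composition of $\varphi_j$ with multiplication by the complex number $\gamma$. Then $\varphi$ is $\RR$-linear with $\varphi(\ZZ[i])=\gamma\bbb_j=\aaa$, and $|\varphi(v)|=|\gamma|\,|\varphi_j(v)|$. Multiplicativity of the ideal norm gives $\N\aaa=|\gamma|^2\N\bbb_j$, and since $\N\bbb_1,\dots,\N\bbb_{h_K}$ is a finite list of positive reals we obtain $|\gamma|^2\ll\N\aaa\ll|\gamma|^2$. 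Squaring the two-sided bound $C^{-1}|\gamma|\,|v|\le|\varphi(v)|\le C|\gamma|\,|v|$ and recalling the convention $\abs{w}=w\overline w=|w|^2$ then yields $\N\aaa\,\abs{v}\ll\abs{\varphi(v)}\ll\N\aaa\,\abs{v}$, as required.

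The only real subtlety is the uniformity of the implied constants, and this is exactly where finiteness of the class number is used. One cannot simply take a Minkowski-reduced $\ZZ$-basis of $\aaa$ itself and let $\varphi$ send $1,i$ to it: a rank-$2$ lattice of prescribed covolume can have an arbitrarily large ratio of successive minima, so the resulting linear map would have unbounded distortion and the lower bound $\abs{\varphi(v)}\gg\N\aaa\,\abs{v}$ would fail. Among ideals this degeneration cannot occur — \emph{skewness} is a similarity invariant and therefore takes only $h_K$ distinct values — which is precisely what the passage to the fixed representatives $\bbb_j$ exploits.
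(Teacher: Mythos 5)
Your proof is correct, but it takes a genuinely different route from the paper. The paper works directly with the lattice $\aaa \subset \CC$: it takes a reduced $\ZZ$-basis $w_1, w_2$ with $|w_i| = \lambda_i$ (the successive minima), bounds the operator norms of $\varphi$ and $\varphi^{-1}$ in terms of $\lambda_2$ and $\det\aaa$, and then controls the skewness of the ideal lattice via Minkowski's second theorem together with the ideal-specific lower bound $\lambda_1 \geq \sqrt{\N\aaa}$, which yields $\lambda_2 \ll \sqrt{\N\aaa}$ and $\det\aaa/\lambda_2 \gg \sqrt{\N\aaa}$. You instead prove the needed non-degeneracy structurally: fix bases of the $h_K$ class representatives $\bbb_j$ once and for all, write $\aaa = \gamma\bbb_j$, and observe that multiplication by $\gamma \in K^\times$ acts on $\CC$ as a similarity with ratio $|\gamma| = \N(\gamma\OO_K)^{1/2}$, so that $\N\aaa \asymp |\gamma|^2$ and the distortion of $\varphi = \gamma\varphi_j$ is bounded by the finitely many distortions of the $\varphi_j$. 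Both arguments give constants depending only on $K$, as the lemma allows. Your argument is more elementary (no geometry of numbers at all) but leans on the imaginary quadratic setting in an essential way: it needs the class number to be finite with a fixed set of representatives, and it needs $K$ to have a single complex place so that $K^\times$ acts by similarities and $N_{K|\QQ}(\gamma) = |\gamma|^2$; the paper's successive-minima argument is the one that transfers more directly to lattices coming from ideals in other settings. Your closing remark that a reduced basis of $\aaa$ itself would not suffice is slightly overstated — that is exactly what the paper uses, the point being that for ideal lattices the ratio $\lambda_2/\lambda_1$ is bounded in terms of $K$ — but you immediately note that ideals cannot degenerate, so the substance is right.
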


  \begin{proof}
    By \cite[Lemma VIII.1, Lemma V.8]{MR1434478}, there exists a basis $w_1,
    w_2$ of $\aaa$ with $|w_i| = \lambda_i$, where $\lambda_1\leq \lambda_2$
    are the successive minima of $\aaa$ (with respect to the unit ball). Define
    $\varphi$ by $\varphi(1) = w_1$, $\varphi(i) = w_2$. Clearly, its operator
    norm $|\varphi|$ is bounded by $2\lambda_2$. Together with the inequality
    $|\varphi^{-1}| \leq 2 \lambda_2/\det \aaa$ (see, e.g., the proof of
    \cite[Lemma 3.3]{arXiv:1302.6151}), this gives
    \begin{equation*}
      \left(\frac{\det \aaa}{2\lambda_2}\right)^2\abs{v}  \leq \abs{\varphi(v)} \leq (2 \lambda_2)^2 \abs{v}\text.
    \end{equation*}
    Minkowski's second theorem and the fact that $\lambda_1 \geq \sqrt{\N\aaa}$
    (see, e.g., \cite[Lemma 5]{MR2247898}) imply that $\lambda_2 \ll
    \sqrt{\N\aaa}$ and $\det\aaa/(\lambda_2) \gg \sqrt{\N\aaa}$.
  \end{proof}

  \begin{proof}[Proof of Theorem \ref{thm:circle_problem}]
    Denote the left-hand side by $Z$. If $t < 4\sqrt{\N\qqq}\N\aaa$
    then
   \begin{equation*}
     Z \ll 2^{\omega_K(\qqq)}\sum_{\substack{z \in \aaa\\\abs{z}\leq t}}1\leq
     2^{\omega_K(\qqq)}\sum_{\substack{z \in \aaa\\\abs{z}\leq
         4\sqrt{\N\qqq}\N\aaa}}1 \ll 2^{\omega_K(\qqq)}\sqrt{\N\qqq},
   \end{equation*}
   so the lemma holds. We assume from now on that $t \geq
   4\sqrt{\N\qqq}\N\aaa$. Define
   \begin{equation*}
     \delta := \frac{\N\qqq^{1/3}\N\aaa^{2/3}}{t^{1/6}} < \sqrt{t}/2.
   \end{equation*}
   Let $\alpha' \in \OO_K$ with $\alpha'\equiv 0 \bmod \aaa$ and $\alpha'
   \equiv\alpha\bmod \qqq$. By the Chinese remainder theorem, we have
    \begin{equation}\label{eq:circle_problem_Z}
      Z = \sum_{\substack{\rho \bmod \qqq\\\rho\OO_K + \qqq =
          \OO_K}}\sum_{v \in \aaa\qqq}\chi_{t^{1/2}D}(v+\alpha'\rho^2)\text,
    \end{equation}
    where $\chi_{rD}$ is the characteristic function of the disc
    \begin{equation*}
      rD := \{z
      \in \RR^2 \mid |z| \leq r\}.
    \end{equation*}
    Let $\psi:\RR^2\to [0,\infty)$ be a bump function for $D$, that is, $\psi
    \in \CCC^\infty(\RR^2)$, $\psi(z)=0$ for $z \not\in D$, and
    $\int_{\RR^2}\psi \dd z = 1$, and write
    $\psi_\delta(z):=\delta^{-2}\psi(\delta^{-1}z)$. Then $\psi_\delta$ is a
    bump function for $\delta D$. We define
    \begin{equation*}
      F^\pm_\delta:=\chi_{(t^{1/2}\pm \delta)D}*\psi_\delta,
    \end{equation*}
    where $*$ denotes the usual convolution of
    functions. Then $F^\pm_\delta$ are Schwartz functions and
    \begin{equation}\label{eq:circle_problem_lower_upper_bound}
      F^-_\delta(z) \leq \chi_{t^{1/2}D}(z) \leq F^+_\delta(z)\text{ for all }z
      \in \RR^2\text.
    \end{equation}
    Using \eqref{eq:circle_problem_Z},
    \eqref{eq:circle_problem_lower_upper_bound}, and the Poisson summation
    formula, we obtain
    \begin{equation}\label{eq:circle_problem_upper_lower_estimate}
      \frac{1}{\det(\aaa\qqq)}\sum_{w \in
        \dual{(\aaa\qqq)}}\ft{F^-_\delta}(w)S(w) \leq
      Z \leq  \frac{1}{\det(\aaa\qqq)}\sum_{w \in
        \dual{(\aaa\qqq)}}\ft{F^+_\delta}(w)S(w)
    \end{equation}
    where $\ft{F^\pm_\delta}$ is the Fourier transform of $F^\pm_\delta$ and
    \begin{equation*}
      S(w) := \sum_{\substack{\rho \bmod
          \qqq\\\rho\OO_K + \qqq = \OO_K}}e^{2\pi i \ip{\alpha'\rho^2}{w}}.
  \end{equation*}
  By properties of the Fourier transform, 
  \begin{equation*}
    \ft{F^\pm_\delta}(w) = (\sqrt{t}\pm \delta)^2\ft{\chi_D}((\sqrt{t}\pm\delta)w)\ft{\psi}(\delta w)\text.
  \end{equation*}
  Clearly, $\ft{\chi_D}(0) = \pi$, $\ft{\psi}(0) = 1$, and
  $S(0)=\phi_K(\qqq)$, so the summands corresponding to $w=0$ in the
  upper and lower bound from \eqref{eq:circle_problem_upper_lower_estimate} are
  \begin{equation*}
    \frac{\pi (\sqrt{t}\pm\delta)^2 \phi_K(\qqq)}{\det(\aaa\qqq)} = \frac{2 \pi
      \phi_K^*(\qqq) t}{\sqrt{|\Delta_K|}\N\aaa} +
    O\left(\frac{\sqrt{t}\delta}{\N\aaa}\right) = \frac{2 \pi
      \phi_K^*(\qqq) t}{\sqrt{|\Delta_K|}\N\aaa} + O\left(\left(\frac{\N\qqq t}{\N\aaa}\right)^{1/3}\right)\text.
  \end{equation*}
  This gives the correct main term and an acceptable error term. To prove the
  theorem, we need to bound the sums
  \begin{equation}\label{eq:circle_problem_error_sums}
   \frac{(\sqrt{t}\pm\delta)^2}{\det(\aaa\qqq)}\sum_{\substack{w \in
       \dual{(\aaa\qqq)}\\w \neq 0}}\ft{\chi_D}((\sqrt{t}\pm\delta)w)\ft{\psi}(\delta w)S(w).
  \end{equation}
  For $|w| > 0$, it is well known that
  \begin{equation*}
    \ft{\chi_D}(w) = |w|^{-1}J_1(2 \pi
    |w|) \ll |w|^{-1}\min\{1, |w|^{-1/2}\} \leq |w|^{-3/2},
  \end{equation*}
  where $J_1$ is the first-order Bessel function of the first kind. Moreover,
  $\ft{\psi}$ is a Schwartz function, so 
  \begin{equation*}
    \ft{\psi}(w) \ll
    \min\{1,|w|^{-1}\}.
  \end{equation*}
  Hence, the sums in \eqref{eq:circle_problem_error_sums} are
  \begin{align*}
    &\ll \frac{t^{1/4}}{\N(\aaa\qqq)}\sum_{\substack{w \in
      \dual{(\aaa\qqq)}\\w\neq 0}}|w|^{-3/2}\min\{1,(\delta|w|)^{-1}\}|S(w)|\\
    &\ll \frac{t^{1/4}}{\N(\aaa\qqq)}\sum_{\substack{w \in
      \dualtr{(\aaa\qqq)}\\w\neq 0}}|w|^{-3/2}\min\{1,(\delta|w|)^{-1}\}|S(2\overline{w})|\text.
  \end{align*}
   Since $w\alpha' \in \dualtr{(\aaa\qqq)}\aaa = \dualtr{\qqq}$, we can apply
   Lemma \ref{lem:exp_sum_2} to bound
   \begin{equation*}
     S(2\overline{w}) = \sum_{\substack{\rho \bmod
         \qqq\\\rho\OO_K + \qqq = \OO_K}}e^{2\pi i \Tr(w\alpha'\rho^2)} \ll \N(w\alpha'\qqq\DD_K+\qqq)^{(1-\epsilon)/2}\N\qqq^{1/2+\epsilon}.
   \end{equation*}
  This allows us to bound the sums in \eqref{eq:circle_problem_error_sums} by
  \begin{align*}
    &\ll \frac{t^{1/4}}{\N\aaa\N\qqq^{1/2-\epsilon}}\sum_{\bbb
    \mid \qqq}\N\bbb^{(1-\epsilon)/2}\sum_{\substack{w \in \dualtr{(\aaa\qqq)},
      w \neq 0\\w\alpha'\qqq\DD_K + \qqq = \bbb}}|w|^{-3/2}\min\{1,(\delta|w|)^{-1}\}.
  \end{align*}
  Now $w\alpha'\qqq\DD_K =
  w(\dualtr{(\aaa\qqq)})^{-1}\alpha'\aaa^{-1}$ and $\alpha'\aaa^{-1} +
  \qqq = \OO_K$, so the conditions under the inner sum imply $w \in
  \dualtr{(\aaa\qqq)}\bbb = \dualtr{(\aaa\qqq/\bbb)}$, and we further
  estimate
  \begin{align*}
  &\ll\frac{t^{1/4}}{\N\aaa\N\qqq^{1/2-\epsilon}}\sum_{\bbb
    \mid \qqq}\N\bbb^{(1-\epsilon)/2}\sum_{\substack{w \in \dualtr{(\aaa\qqq/\bbb)},
      w \neq 0}}|w|^{-3/2}\min\{1,(\delta|w|)^{-1}\}\text.
  \end{align*}
  For each $\bbb \mid \qqq$, let $\varphi_\bbb:\CC\to\CC$ be a map as in Lemma \ref{lem:param_ideal} with
  $\varphi_\bbb(\ZZ[i]) = \dualtr{(\aaa\qqq/\bbb)}$. Then the above expression
  is
  \begin{align*}
    &= \frac{t^{1/4}}{\N\aaa\N\qqq^{1/2-\epsilon}}\sum_{\bbb \mid
      \qqq}\N\bbb^{(1-\epsilon)/2}\sum_{\substack{v \in \ZZ[i],
        v \neq 0}}|\varphi_\bbb(v)|^{-3/2}\min\{1,(\delta|\varphi_\bbb(v)|)^{-1}\}\\
    &\ll \frac{t^{1/4}\N\qqq^{1/4+\epsilon}}{\N\aaa^{1/4}}\left(\sum_{\bbb \mid
        \qqq}\N\bbb^{-1/4-\epsilon/2}\right)\sum_{\substack{v \in \ZZ[i], v
        \neq
        0}}|v|^{-3/2}\min\left\{1,\frac{(\N\aaa\N\qqq)^{1/2}}{\delta|v|}\right\}\\
    &\ll \frac{t^{1/4}\N\qqq^{1/4+2\epsilon}}{\N\aaa^{1/4}}\left(\sum_{\substack{v
        \in \ZZ[i]\\1 \leq |v| \leq (\N\aaa\N\qqq)^{1/2}\delta^{-1}}}\!\!|v|^{-3/2}
    + \frac{(\N\aaa\N\qqq)^{1/2}}{\delta}\sum_{\substack{v \in
        \ZZ[i]\\|v| > (\N\aaa\N\qqq)^{1/2}\delta^{-1}}}\!\!|v|^{-5/2}\right)\\
    &\ll \frac{t^{1/4}\N\qqq^{1/4+2\epsilon}}{\N\aaa^{1/4}} \cdot
    \frac{(\N\aaa\N\qqq)^{1/4}}{\delta^{1/2}} = \left(\frac{t}{\N\aaa}\right)^{1/3}\N\qqq^{1/3+2\epsilon}.\qedhere
  \end{align*}
  \end{proof}

\section{An improved second summation}\label{sec:second_sum}

  In this section, we use Theorem
  \ref{thm:circle_problem} to obtain an improved error term in
  \cite[Proposition 6.1]{arXiv:1302.6151} (for $n=2$). To this end, let us briefly recall
  the setup from there: 

  We consider a nonzero fractional ideal $\OO$ of $K$, a nonzero ideal $\id q$
  of $\OO_K$, and $A \in K$ with $v_\p(A \OO) = 0$ for all prime ideals $\p$
  dividing $\id q$.

  Let $\vartheta :
  \II_K \to \RR$ be a function satisfying
  \begin{equation}\label{eq:second_sum_thetabound}
    \sum_{\substack{\aaa \in \II_K\\\N\aaa \le t}} |(\vartheta*\mu_K)(\aaa)|\cdot \N\aaa \ll c_\vartheta t(\log(t+2))^{C}
  \end{equation}
  for all $t > 0$, with constants $c_\vartheta > 0$ and $C \geq 0$. We write
  \begin{equation*}
    \mathcal{A}(\vartheta(\aaa), \aaa, \id q) := \sum_{\substack{\aaa \in \II_K\\\aaa+\id q = \OO_K}}\frac{(\vartheta*\mu_K)(\aaa)}{\N\aaa}\text.
  \end{equation*}
  For $1 \leq t_1 \leq t_2$, we consider a function $g : [t_1, t_2] \to \RR$
  for which there exists a partition of $[t_1,t_2]$ into at most $R(g)$ intervals on
  whose interior $g$ is continuously differentiable and monotonic.  Moreover,
  with constants $c_g > 0$ and $a \leq 0$, we assume that
  \begin{equation}\label{eq:second_sum_gbound}
    |g(t)| \ll c_g t^a \text{ on }[t_1, t_2].
  \end{equation}
  In \cite[Proposition 6.1]{arXiv:1302.6151}, we proved an asymptotic formula
  for the sum
  \begin{equation*}
    S(t_1, t_2) := \sum_{\substack{z \in \OO^{\neq 0}\\t_1 < \N(z\OO^{-1}) \leq t_2}}\vartheta(z\OO^{-1})\sum_{\substack{\rho \bmod \id q\\\rho\OO_K+\id q = \OO_K\\\rho^2 \equiv_{\id q} A z}}g(\N(z\OO^{-1}))\text.
  \end{equation*}
  Here, we improve the error terms to obtain the following result.
  \begin{prop}\label{prop:second_summation_improved}
    Let $\epsilon > 0$. Under the above assumptions, we have
    \begin{multline*}
      S(t_1, t_2) =
      \frac{2\pi}{\sqrt{|\Delta_K|}}\phi^*(\qqq)\mathcal{A}(\vartheta(\aaa),
      \aaa, \qqq)\int_{t_1}^{t_2}g(t)\dd t\\ + O(c_\vartheta
      c_g(\N\qqq^{1/3+\epsilon}\mathcal{E}_1 +
      2^{\omega_K(\qqq)}\N\qqq^{1/2}\mathcal{E}_2))\text,
    \end{multline*}
    where
   \begin{equation*}
     \mathcal{E}_1 \ll_{a, C}R(g)
     \begin{cases}
       \sup_{t_1 \leq t \leq t_2}(t^{a+1/3})&\text{ if } a \neq -1/3,\\
       \log(t_2+2) &\text{ if } a = -1/3,
     \end{cases}
   \end{equation*}
   and
   \begin{equation*}
     \mathcal{E}_2 \ll_{a, C}R(g)
     \begin{cases}
       t_1^a\log(t_1+2)^{C+1} &\text{ if } a \neq 0,\\
       \log(t_2+2)^{C+1} &\text{ if } a = 0.
     \end{cases}
   \end{equation*}
   Moreover, the same formula holds if, in the definition of $S(t_1, t_2)$, the
   range $t_1 < \N(z\OO^{-1}) \leq t_2$ is replaced by $t_1 \leq \N(z\OO^{-1})
   \leq t_2$.
 \end{prop}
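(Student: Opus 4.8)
The strategy is to reduce the sum $S(t_1, t_2)$ to the counting function of Theorem \ref{thm:circle_problem} by a Dirichlet-convolution/hyperbola manipulation, then replace the resulting sum by an integral via partial summation, and finally assemble the error terms. First, I would write $\vartheta = (\vartheta * \mu_K) * 1$, so that $\vartheta(z\OO^{-1}) = \sum_{\aaa \mid z\OO^{-1}} (\vartheta*\mu_K)(\aaa)$. Interchanging the order of summation and setting $z = \xi$ with $\xi \in \aaa\OO$ (so that $z\OO^{-1}\aaa^{-1}$ is integral), this turns $S(t_1,t_2)$ into a sum over $\aaa \in \II_K$ of $(\vartheta*\mu_K)(\aaa)$ times an inner sum over $\xi$ in the fractional ideal $\aaa\OO$, coprime to $\qqq$ in the appropriate sense, with $\xi$ lying in a prescribed quadratic residue class mod $\qqq$ (namely $\xi \equiv_\qqq A z'$ for the relevant representatives), weighted by $g(\N(\xi\OO^{-1}))$. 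The key point is that for fixed $\aaa$, the inner sum — \emph{without} the weight $g$ and with the sharp cutoff $\N(\xi(\aaa\OO)^{-1}) \le u$ — is exactly of the shape counted by Theorem \ref{thm:circle_problem}, with the ideal there being $\aaa\OO$ (rescaled to have integer norm) and the modulus $\qqq$; this is where the condition $v_\p(A\OO)=0$ for $\p \mid \qqq$ and coprimality of $\aaa$ to $\qqq$ (forced by the factor $(\vartheta*\mu_K)(\aaa)$ on the support, since otherwise the residue condition is vacuous or the term drops) are used.

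Next I would handle the weight $g$ by partial summation (Abel summation) on each of the at most $R(g)$ subintervals where $g$ is $C^1$ and monotonic. On each such piece, writing $N(u)$ for the counting function from Theorem \ref{thm:circle_problem} (main term $\frac{2\pi\phi_K^*(\qqq)}{\sqrt{|\Delta_K|}\N(\aaa\OO)}u$ plus the two error terms), partial summation against $g$ converts the main term into $\frac{2\pi\phi_K^*(\qqq)}{\sqrt{|\Delta_K|}\N(\aaa\OO)}\int g$, and converts each error term $O(\mathcal{E}(u))$ into $O(R(g) \sup |g| \cdot \mathcal{E}) + O(R(g)\int |g'| \cdot \mathcal{E})$; using monotonicity on each piece, $\int |g'|$ telescopes and is dominated by $\sup|g|$, so effectively one pays $R(g)$ times the sup of $\mathcal{E}(u)|g(u)|$ over the range. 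Summing the main term over $\aaa$ coprime to $\qqq$ reconstitutes $\mathcal{A}(\vartheta(\aaa),\aaa,\qqq)$ times $\int_{t_1}^{t_2} g$, giving the claimed main term. For the two error contributions: the first error term of Theorem \ref{thm:circle_problem} is $O((u/\N(\aaa\OO))^{1/3}\N\qqq^{1/3+\epsilon})$, and after multiplying by $|(\vartheta*\mu_K)(\aaa)|$ and summing over $\aaa$ this is controlled by $\sum_{\aaa}|(\vartheta*\mu_K)(\aaa)|\N\aaa^{-1/3} \cdot (\text{power of }\N\OO)$; here I would invoke hypothesis \eqref{eq:second_sum_thetabound} via partial summation to bound $\sum_{\N\aaa \le X}|(\vartheta*\mu_K)(\aaa)|\N\aaa^{-1/3} \ll c_\vartheta X^{2/3}(\log X)^C$, and then integrate against $|g(u)| \ll c_g u^a$ to produce the bound $c_\vartheta c_g \N\qqq^{1/3+\epsilon}\mathcal{E}_1$ with $\mathcal{E}_1$ as stated (the case split at $a = -1/3$ being exactly the convergence/divergence dichotomy of $\int u^{a+1/3-1}\dd u$, up to the $R(g)$ and $\log$ bookkeeping). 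The second error term $O(2^{\omega_K(\qqq)}\N\qqq^{1/2})$ of Theorem \ref{thm:circle_problem} is independent of $u$ and of $\aaa$, so summing $|(\vartheta*\mu_K)(\aaa)|$ over $\aaa$ with $\N(\aaa\OO) \le u/t_1$-type ranges and integrating $|g|$ produces $2^{\omega_K(\qqq)}\N\qqq^{1/2}\mathcal{E}_2$, with the $\log(\cdot)^{C+1}$ coming from \eqref{eq:second_sum_thetabound} (the extra $+1$ in the exponent from one integration/summation of $1/\N\aaa$) and the case split at $a=0$ from $\int u^{a-1}\dd u$.

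Finally, the variant with the closed range $t_1 \le \N(z\OO^{-1}) \le t_2$ differs only by the single additional "boundary" shell $\N(z\OO^{-1}) = t_1$, whose contribution is absorbed into the error by the same counting estimate applied at the endpoint. The main obstacle I anticipate is bookkeeping the error terms cleanly across the $R(g)$ monotonic pieces while keeping the dependence on $\N\qqq$ exactly at $\N\qqq^{1/3+\epsilon}$ and $\N\qqq^{1/2}$ respectively — in particular, making sure that the $\epsilon$ in Theorem \ref{thm:circle_problem}'s first error term and the $\epsilon$ lost in the $\aaa$-summation combine to a single $\epsilon$, and that the $\N\OO$-dependence (which the statement suppresses into the implied constant) genuinely drops out or is harmless. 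A secondary technical point is verifying that the hypothesis $\aaa + \qqq = \OO_K$ required by Theorem \ref{thm:circle_problem} is automatically in force on the support of $(\vartheta*\mu_K)(\aaa)$ within the sum, which should follow from the coprimality constraint already present in the definition of $S(t_1,t_2)$ and of $\mathcal{A}$.
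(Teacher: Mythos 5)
Your plan follows essentially the same route as the paper: write $\vartheta=(\vartheta*\mu_K)*1$, reduce the inner unweighted count to Theorem \ref{thm:circle_problem} (after clearing the denominators of $A$ and of $\OO$, and noting that divisor ideals not coprime to $\qqq$ contribute nothing), sum the resulting main and error terms over the divisor ideals using \eqref{eq:second_sum_thetabound}, and then feed the summatory asymptotic into a partial summation against $g$ over the $R(g)$ monotonic pieces. This is exactly the paper's decomposition, which packages the divisor-sum step as Lemma \ref{lem:sum_arith_function} and delegates the partial summation to a lemma of the earlier paper.

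One quantitative step as written would not deliver the stated error term. You bound
\begin{equation*}
  \sum_{\N\aaa \le X}\frac{|(\vartheta*\mu_K)(\aaa)|}{\N\aaa^{1/3}} \ll c_\vartheta X^{2/3}(\log X)^{C},
\end{equation*}
but if you use this, the first error in the summatory function at height $u$ becomes $\ll c_\vartheta\, u(\log u)^{C}\N\qqq^{1/3+\epsilon}$, and partial summation against $|g(u)|\ll c_g u^{a}$ then yields an error of size $\sup_t t^{a+1}(\log t)^{C}$ rather than $\mathcal{E}_1\ll R(g)\sup_t t^{a+1/3}$; your own description of the $a=-1/3$ dichotomy tacitly assumes the per-$u$ error is $\asymp u^{1/3}\N\qqq^{1/3+\epsilon}$, which is inconsistent with the displayed bound. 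The fix is immediate and is what the paper does: partial summation against \eqref{eq:second_sum_thetabound} shows that $\sum_{\aaa}|(\vartheta*\mu_K)(\aaa)|\N\aaa^{-1/3}$ \emph{converges} and is $O_C(c_\vartheta)$, so the first error in Lemma \ref{lem:sum_arith_function} is $\ll_C c_\vartheta t^{1/3}\N\qqq^{1/3+\epsilon}$, which then produces $\mathcal{E}_1$ as claimed. A second, minor omission: since $\mathcal{A}(\vartheta(\aaa),\aaa,\qqq)$ is the full series while the divisor sum is truncated at $\N\aaa\le t$, you must also bound the tail $t\sum_{\N\aaa>t}|(\vartheta*\mu_K)(\aaa)|\N\aaa^{-1}\ll_C c_\vartheta\log(t+2)^{C}$, which again follows from \eqref{eq:second_sum_thetabound} and is absorbed into the $\mathcal{E}_2$ term; your $\mathcal{E}_2$ bookkeeping (the extra $\log$ from summing $|(\vartheta*\mu_K)(\aaa)|$ without weights) is otherwise correct.
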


 The proof is analogous to the proof of \cite[Proposition
 6.1]{arXiv:1302.6151}, except that we use the lemma below instead of
 \cite[Lemma 6.4]{arXiv:1302.6151}.

 \begin{lemma}\label{lem:sum_arith_function}
    Let $\aaa$, $\qqq$ be ideals of $\OO_K$ and let $\alpha \in \OO_K$
    with $\aaa+\qqq = \alpha\OO_K+\qqq = \OO_K$, and
    $\epsilon>0$. Then, for $t \geq 0$,
    \begin{multline*}
      \sum_{\substack{\rho \bmod \qqq\\\rho\OO_K + \qqq =
          \OO_K}}\sum_{\substack{z \in \aaa^{\neq 0}\\z \equiv
          \alpha\rho^2 \bmod \qqq\\\abs{z}\leq
          t\N\aaa}}\vartheta(z\aaa^{-1}) = \frac{2 \pi}{\sqrt{|\Delta_K|}}
        \phi_K^*(\qqq)\mathcal{A}(\vartheta(\bbb),\bbb,\qqq)t\\
      +
      O_{C}\left(c_\vartheta\left(t^{1/3}\N\qqq^{1/3+\epsilon}
          +
          2^{\omega_K(\qqq)}\N\qqq^{1/2}\log(t+2)^{C+1}\right)\right).
    \end{multline*}
  \end{lemma}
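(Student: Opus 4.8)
The plan is to break $\vartheta$ up by M\"obius inversion into an auxiliary divisor parameter $\bbb$, to apply Theorem~\ref{thm:circle_problem} once for each $\bbb$ (to the ideal $\bbb\aaa$, with radius parameter $t\N\aaa$), and then to reassemble the pieces by partial summation against the hypothesis~\eqref{eq:second_sum_thetabound}.

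First I would write $\vartheta = (\vartheta*\mu_K)*1$. For $z \in \aaa^{\neq 0}$ the ideal $z\aaa^{-1}$ is integral and nonzero, so $\vartheta(z\aaa^{-1}) = \sum_{\bbb\colon z \in \bbb\aaa}(\vartheta*\mu_K)(\bbb)$; substituting this into the left-hand side $Z$ of the lemma and interchanging the order of summation gives
\[
  Z = \sum_{\bbb \in \II_K}(\vartheta*\mu_K)(\bbb)\sum_{\substack{\rho \bmod \qqq\\\rho\OO_K+\qqq = \OO_K}}\ \sum_{\substack{z \in \bbb\aaa,\ z \neq 0\\ z \equiv \alpha\rho^2 \bmod \qqq\\ \abs{z} \le t\N\aaa}}1.
\]
Two remarks prune the outer sum. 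If a prime $\p$ divides both $\bbb$ and $\qqq$, then $z \in \bbb\aaa$ together with $\aaa+\qqq = \OO_K$ forces $v_\p(z) \ge 1$, which contradicts $z \equiv \alpha\rho^2 \bmod \qqq$ since $\alpha$ and $\rho$ are invertible modulo $\qqq$; hence the inner double sum vanishes unless $\bbb + \qqq = \OO_K$. Moreover $\abs{z} = z\overline{z} = N_{K|\QQ}(z) \ge \N(\bbb\aaa)$ for every $z \in \bbb\aaa\smallsetminus\{0\}$, so the inner double sum is empty unless $\N\bbb \le t$. For the surviving $\bbb$ we have $\bbb\aaa + \qqq = \OO_K$ and $\alpha\OO_K+\qqq = \OO_K$, so Theorem~\ref{thm:circle_problem} applied to the ideal $\bbb\aaa$ evaluates the inner double sum as
\[
  \frac{2\pi\phi_K^*(\qqq)}{\sqrt{|\Delta_K|}\,\N\bbb}\,t + O_\epsilon\!\left(\Big(\frac{t}{\N\bbb}\Big)^{1/3}\N\qqq^{1/3+\epsilon} + 2^{\omega_K(\qqq)}\N\qqq^{1/2}\right).
\]
(For $\qqq \neq \OO_K$ the value $z = 0$ occurring in Theorem~\ref{thm:circle_problem} is automatically excluded by the congruence; for $\qqq = \OO_K$ the resulting over-count of $z=0$, summed over all $\bbb$ with $\N\bbb \le t$, has size $\ll c_\vartheta \log(t+2)^{C+1}$ by the estimate below, so it is harmless.)

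It then remains to sum over $\bbb$. The main term becomes $\tfrac{2\pi}{\sqrt{|\Delta_K|}}\phi_K^*(\qqq)\,t\sum_{\bbb+\qqq=\OO_K,\ \N\bbb\le t}(\vartheta*\mu_K)(\bbb)/\N\bbb$; extending the summation to all $\bbb$ coprime to $\qqq$ completes it to $\mathcal{A}(\vartheta(\bbb),\bbb,\qqq)$ — the series converging absolutely by partial summation from~\eqref{eq:second_sum_thetabound} — at the cost of a tail $\ll t\sum_{\N\bbb>t}|(\vartheta*\mu_K)(\bbb)|/\N\bbb \ll_C c_\vartheta\log(t+2)^{C}$. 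For the error terms, set $a(u):=\sum_{\N\bbb\le u}|(\vartheta*\mu_K)(\bbb)|\,\N\bbb \ll c_\vartheta u\log(u+2)^{C}$; partial summation then yields $\sum_{\N\bbb\le t}|(\vartheta*\mu_K)(\bbb)|/\N\bbb^{1/3} \ll_C c_\vartheta$ (the relevant integral converging, since $4/3>1$) and $\sum_{\N\bbb\le t}|(\vartheta*\mu_K)(\bbb)| \ll_C c_\vartheta\log(t+2)^{C+1}$, so the two error contributions are $\ll_C c_\vartheta t^{1/3}\N\qqq^{1/3+\epsilon}$ and $\ll_C c_\vartheta 2^{\omega_K(\qqq)}\N\qqq^{1/2}\log(t+2)^{C+1}$. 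Adding the main term, the tail, and these two contributions (the tail being absorbed into the last one) gives the asserted formula.

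The only genuine subtlety is the partial-summation accounting for the second error term: the contribution $2^{\omega_K(\qqq)}\N\qqq^{1/2}$ in Theorem~\ref{thm:circle_problem} does not decay with $\N\bbb$, so naively summing it over $\N\bbb\le t$ would cost a factor $t\log(t+2)^{C}$, which is far too large; the resolution is that~\eqref{eq:second_sum_thetabound} controls the $\N\bbb$-\emph{weighted} sum of $|(\vartheta*\mu_K)|$, so after dividing out the weight, partial summation converts that factor $t$ into a mere $\log(t+2)$, exactly matching the stated error term. The M\"obius inversion, the interchange of summation, and the two reductions on $\bbb$ are routine, and the latter are precisely what keep both the completion of the main term and the error sums convergent.
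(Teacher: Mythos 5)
Your proposal is correct and follows essentially the same route as the paper: M\"obius inversion $\vartheta=(\vartheta*\mu_K)*1$, reduction to $\bbb+\qqq=\OO_K$ and $\N\bbb\le t$, an application of Theorem~\ref{thm:circle_problem} to the ideal $\aaa\bbb$, and then partial summation against \eqref{eq:second_sum_thetabound} to bound the three resulting error sums. The extra remarks (exclusion of $z=0$, explicit partial-summation bookkeeping) only spell out details the paper leaves implicit.
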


  \begin{proof}
    We proceed similarly to the proof of \cite[Lemma 6.4]{arXiv:1302.6151}. Let
    $L$ be the expression on the left-hand side. Then
    \begin{align*}
      L = \sum_{\N\bbb \leq
        t}(\vartheta*\mu_K)(\bbb)\sum_{\substack{\rho \bmod
          \qqq\\\rho\OO_K + \qqq = \OO_K}}\sum_{\substack{z \in
          \aaa\bbb^{\neq 0}\\z \equiv \alpha\rho^2 \bmod
          \qqq\\\abs{z}\leq t\N\aaa}}1.
    \end{align*}
   The inner sum is zero whenever $\bbb+\qqq\neq\OO_K$ and can be estimated by
   Theorem \ref{thm:circle_problem} otherwise. Hence,
   \begin{align*}
     L = \sum_{\substack{\N\bbb \leq
        t\\\bbb+\qqq=\OO_K}}(\vartheta*\mu_K)(\bbb)\left(\frac{2\pi
        \phi_K^*(\qqq) t}{\sqrt{|\Delta_K|}\N\bbb} +
      O\left(\left(\frac{t}{\N\bbb}\right)^{1/3}\N\qqq^{1/3+\epsilon} + 2^{\omega_K(\qqq)}\N\qqq^{1/2} \right)\right).
   \end{align*}
   This gives the main term in the lemma plus an error term
   $\ll$
   \begin{equation*}
     t\sum_{\N\bbb>t}\!\frac{|(\vartheta*\mu_K)(\bbb)|}{\N\bbb} +
     t^{1/3}\N\qqq^{1/3+\epsilon}\sum_{\N\bbb\leq t}\!\frac{|(\vartheta*\mu_K)(\bbb)|}{\N\bbb^{1/3}}
     + 2^{\omega_K(\qqq)}\N\qqq^{1/2}\sum_{\N\bbb\leq t}\!|(\vartheta*\mu_K)(\bbb)|\text.
   \end{equation*}
   The first part is $\ll_C c_\vartheta\log(t+2)^C$, the second part is $\ll_C
   c_\vartheta t^{1/3}\N\qqq^{1/3+\epsilon}$, and the third part is $\ll_C
   c_\vartheta 2^{\omega_K(\qqq)}\N\qqq^{1/2}\log(t+2)^{C+1}$.
  \end{proof}

\begin{proof}[Proof of Proposition \ref{prop:second_summation_improved}]
  With
  \begin{equation*}
    \tilde{\vartheta}(\aaa) := \vartheta(\aaa)\sum_{\substack{z \in \OO^{\neq
          0}\\z\OO^{-1}=\aaa }}\sum_{\substack{\rho \bmod \qqq\\\rho\OO_K +
        \qqq = \OO_K\\\rho^2 \equiv_\qqq Az}}1\text,
  \end{equation*}
  we have
  \begin{equation*}
    S(t_1, t_2) = \sum_{\substack{\aaa \in [\OO^{-1}]\cap
        \II_K\\t_1<\N\aaa\leq t_2}}\tilde{\vartheta}(\aaa)g(\N\aaa)\text.
  \end{equation*}
  Let $A_1 \in \OO^{-1}$, $A_2 \in \OO_K$, such that $A = A_1/A_2$ and $A_1\OO
  + \qqq = A_2\OO_K + \qqq = \OO_K$. Then, for $t \geq 0$,
  \begin{align*}
    \sum_{\substack{\aaa \in [\OO^{-1}]\cap \II_K\\\N\aaa\leq
        t}}\tilde{\vartheta}(\aaa) &= \sum_{\substack{\rho\bmod\qqq\\\rho\OO_K +
        \qqq = \OO_K}}\sum_{\substack{z \in \OO^{\neq 0}\\A_1 z \equiv
        A_2\rho^2\bmod \qqq\\\N(z\OO^{-1})\leq t}}\vartheta(z\OO^{-1})\\ &=
    \sum_{\substack{\rho\bmod\qqq\\\rho\OO_K + \qqq = \OO_K}}\sum_{\substack{A_1
        z \in A_1\OO^{\neq 0}\\A_1 z \equiv A_2\rho^2\bmod
        \qqq\\\N(A_1z(A_1\OO)^{-1})\leq t}}\vartheta(A_1z(A_1\OO)^{-1})\text.
  \end{align*}
  By Lemma \ref{lem:sum_arith_function}, the last expression is
  \begin{equation*}
    \frac{2 \pi
      \phi^*(\qqq)\mathcal{A}(\vartheta(\bbb),\bbb,\qqq)}{\sqrt{|\Delta_K|}}t
    + O_{C}\left(c_{\vartheta}\left(t^{1/3}\N\qqq^{1/3+\epsilon} + 2^{\omega_K(\qqq)}\N\qqq^{1/2}\log(t+2)^{C+1} \right)\right),
  \end{equation*}
    so the proposition follows from \cite[Lemma 2.10]{arXiv:1302.6151}.
  \end{proof}

\section{Passage to a universal torsor}\label{sec:passage}

Our parameterization of $K$-rational points on the cubic surface $S$
defined by (\ref{eq:def_E6}) derived via
\cite[Section~4]{arXiv:1302.6151} from the description of the Cox ring
of its minimal desingularization $\tS$ \cite{MR2029868, math.AG/0604194}.

\begin{figure}[ht]
  \centering
  \[\xymatrix@R=0.05in @C=0.05in{E_{10} \ar@{-}[r] \ar@2{-}[dr] \ar@3{-}[dd] & \li{7} \ar@{-}[r] & \ex{4} \ar@{-}[r] & \ex{5} \ar@{-}[dr]\\
      & E_8 \ar@{-}[r] & \ex{1} \ar@{-}[r] & \ex{3} \ar@{-}[r] & \ex{6}\\
      E_9 \ar@{-}[rrr] \ar@{-}[ur] & & & \ex{2} \ar@{-}[ur]}\]
  \caption{Configuration of curves on $\tS$.}
  \label{fig:E6_dynkin}
\end{figure}
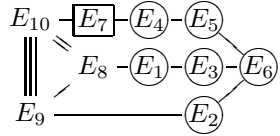

For any given $\classtuple = (C_0, \dots, C_6) \in \classrep^7$, we
define $u_\classtuple := \N(C_0^3 C_1^{-1}\cdots C_6^{-1})$ and
\begin{equation}\label{eq:E6_def_Oj}
  \begin{aligned}
    \OO_1 &:= C_1C_2^{-1},  & \OO_2 &:= C_0C_1^{-1}C_2^{-1}C_3^{-1},  & \OO_3 &:= C_2C_3^{-1}, \\
    \OO_4 &:= C_5C_6^{-1},  & \OO_5 &:= C_4C_5^{-1},  & \OO_6 &:= C_3C_4^{-1},  \\
    \OO_7 &:= C_6,  & \OO_8 &:= C_0C_1^{-1},  & \OO_9 &:= C_0, \\
    \OO_{10} &:= C_0^3C_1^{-1}C_2^{-1}C_3^{-1}C_4^{-1}C_5^{-1}C_6^{-1}.& & & &
  \end{aligned}
\end{equation}
Let
\begin{equation*}
  \OO_{j*} :=
  \begin{cases}
    \OO_j^{\neq 0}, & j \in \{1,\ldots, 7\},\\
    \OO_j, & j \in \{8, 9, 10\}.
  \end{cases}
\end{equation*}
For $\eta_j \in \OO_j$, we define
\begin{equation*}
  \eI_j := \e_j \OO_j^{-1}\text.
\end{equation*}
For $B \geq 0$, let $\mathcal{R}(B)$ be the set of all $(\e_1, \ldots, \e_9)
\in \CC^9$ with $\e_4\e_5\e_7\neq 0$ and
\begin{align}
      \abs{\e_1^2\e_2^3\e_3^4\e_4^4\e_5^5\e_6^6\e_7^3} &\leq B,\label{eq:E6_height_1}\\
      \abs{\e_1^2\e_2^2\e_3^3\e_4^2\e_5^3\e_6^4\e_7\e_8} &\leq B,\label{eq:E6_height_2}\\
      \abs{\e_1\e_2^2\e_3^2\e_4\e_5^2\e_6^3\e_9} &\leq B,\label{eq:E6_height_3}\\
      \abs{\frac{\e_1^2\e_3\e_8^3 + \e_2\e_9^2}{\e_4^2\e_5\e_7^3}} &\leq B\label{eq:E6_height_4}\text.
\end{align}
Moreover, let $M_\classtuple(B)$ be the set of all 
\begin{equation*}
  (\e_1, \ldots, \e_{10}) \in \OO_{1*} \times \cdots \times \OO_{10*} 
\end{equation*}
that satisfy the \emph{height conditions}
\begin{equation}\label{eq:E6_height}
     (\e_1, \ldots, \e_9) \in \mathcal{R}(u_\classtuple B)\text,
\end{equation}
the \emph{torsor equation}
\begin{equation}\label{eq:E6_torsor}
  \e_1^2\e_3\e_8^3 + \e_2\e_9^2 + \e_4^2\e_5\e_7^3\e_{10} = 0,
\end{equation}
and the \emph{coprimality conditions}
\begin{equation}\label{eq:E6_coprimality}
  \eI_j + \eI_k = \OO_K \text{ for all distinct nonadjacent vertices $E_j$, $E_k$ in Figure~\ref{fig:E6_dynkin}.}
\end{equation}

\begin{lemma}\label{lem:E6_passage_to_torsor}
  Let $K$ be an imaginary quadratic field. Then
  \begin{equation*}
    N_{U,H}(B) = \frac{1}{\numunits^7}\sum_{\classtuple \in \classrep^7}|M_\classtuple(B)|\text.
  \end{equation*}
\end{lemma}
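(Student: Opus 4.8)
The plan is to establish a bijection between $K$-rational points on $U$ and a suitable set of integral points on the universal torsor, then count representatives. First I would fix a point $\xx \in U(K) \subset \PP^3(K)$ and lift it to a primitive integral tuple $(x_0,\dots,x_3)$: since $K$ has class number $h_K$ possibly larger than $1$, there is no global choice of such a lift with $x_i \in \OO_K$ and $\gcd$ trivial, so instead one chooses an ideal $\aaa_0$ in the fixed system $\classrep$ representing the relevant class and writes $x_i\OO_K = \aaa_0^{-1}\mathfrak{x}_i$ for integral $\mathfrak{x}_i$ with $\sum \mathfrak{x}_i = \aaa_0^{-1}$ (in the appropriate sense). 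The number of lifts differs by $\OO_K^\times$, which accounts for one factor of $\numunits$; this is standard from Schanuel-type arguments.

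Next I would invoke the Cox ring / universal torsor description of $\tS$ from \cite{MR2029868, math.AG/0604194} as adapted in \cite[Section~4]{arXiv:1302.6151}. Concretely, the blow-up structure in Figure~\ref{fig:E6_dynkin} expresses $x_0, x_1, x_2, x_3$ as monomials in the torsor coordinates $\e_1, \dots, \e_{10}$; the exponents are read off from the curve configuration and are exactly those appearing in the height conditions \eqref{eq:E6_height_1}--\eqref{eq:E6_height_4} and the torsor equation \eqref{eq:E6_torsor}. The substitution turns the defining equation \eqref{eq:def_E6} of $S$ into the torsor equation \eqref{eq:E6_torsor}, and the condition that the $\mathfrak{x}_i$ generate $\aaa_0^{-1}$ (no common prime factor) translates, prime by prime, into the coprimality conditions \eqref{eq:E6_coprimality} among the divisors $\eI_j = \e_j\OO_j^{-1}$; here the ideals $\OO_j$ of \eqref{eq:E6_def_Oj} are precisely the twists of $\OO_K$ needed so that each $\e_j$ ranges over $\OO_{j*}$ (nonzero on the $(-1)$-curves $E_1,\dots,E_7$ corresponding to the boundary, arbitrary on $E_8, E_9, E_{10}$). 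The class-tuple $\classtuple \in \classrep^7$ parameterizes the possible classes of the seven generators of the Picard lattice, and $u_\classtuple = \N(C_0^3C_1^{-1}\cdots C_6^{-1})$ is the norm factor by which the height scales when passing from the anticanonical height on $\PP^3$ to the torsor coordinates, which is why the height bound becomes $\mathcal{R}(u_\classtuple B)$ rather than $\mathcal{R}(B)$.

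The main step is to check that this correspondence is a bijection after dividing out units: each torsor point $(\e_1,\dots,\e_{10}) \in M_\classtuple(B)$ determines $(x_0,\dots,x_3)$ via the monomial map, lies on $S$ by \eqref{eq:E6_torsor}, avoids the line $L$ because $\e_4\e_5\e_7 \neq 0$ (these are exactly the coordinates vanishing on the preimage of $L$), and satisfies $H(\xx)\leq B$ by \eqref{eq:E6_height}; conversely every $\xx \in U(K)$ with $H(\xx)\leq B$ arises this way for a unique $\classtuple$ and a torsor point unique up to the action of $\OO_K^\times$ on the seven ``free'' scaling directions, giving the total factor $\numunits^7$ and the sum over $\classrep^7$. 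I expect the main obstacle to be the bookkeeping of this unit- and class-group action: one must verify that the $\numunits^7$ count is exactly right (no over- or undercounting from torus automorphisms of the torsor), that the coprimality conditions \eqref{eq:E6_coprimality} capture primitivity without redundancy, and that the ideals $\OO_j$ are consistently defined so that the torsor equation \eqref{eq:E6_torsor} is an identity of ideals (all three monomials lie in the same fractional ideal). All of this is the number-field analogue of the classical $\QQ$-parameterization in \cite{MR2332351}, and the verification follows the template of \cite[Section~4]{arXiv:1302.6151}, which handles exactly this passage for del Pezzo surfaces; one simply specializes those general lemmas to the $\Esix$ cubic surface using its Cox ring.
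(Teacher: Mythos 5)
There is a genuine gap. Your proposal correctly identifies the framework (the parameterization machinery of \cite[Section~4]{arXiv:1302.6151} applied to the Cox ring of $\tS$), but the actual content of the proof is not a ``simple specialization'' of those general lemmas, and your sketch never supplies it. The paper does not argue by a direct bijection between primitive lifts in $\PP^3$ and torsor points; it realizes $\tS$ through an explicit chain of six blow-ups of $\PP^2_K$, starting from the four curves $\{y_0=0\}$, $\{y_1=0\}$, $\{y_2=0\}$, $\{-y_0^2y_1-y_2^3=0\}$ and the explicit inverse $(y_0:y_1:y_2)\mapsto(y_0y_1^2:-y_0^2y_1-y_2^3:y_1^3:y_1^2y_2)$ of the projection $(x_0:\cdots:x_3)\mapsto(x_0:x_2:x_3)$, and then proves the counting claim inductively along this chain. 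The surface-specific difficulty, which your proposal does not address, is that for the first three blow-ups the centers are triple intersections ($E_2\cap E_8\cap E_{10}$, etc.), so the general lemma that would automatically propagate the parameterization (\cite[Lemma~4.4]{arXiv:1302.6151}) does not apply; one must instead invoke \cite[Remark~4.5]{arXiv:1302.6151} and verify the new coprimality conditions by hand at each step, using the torsor relation $\e_1''^2\e_3''\e_8''^3+\e_2''\e_9''^2+\e_{10}''=0$ together with the coprimalities already established. This is exactly where the claim that the coprimality conditions are precisely ``non-adjacent divisors in Figure~\ref{fig:E6_dynkin} are coprime'' gets proved; in your write-up it is asserted (``translates, prime by prime'') rather than derived, and without the torsor equation some of these coprimalities are simply not consequences of primitivity of the lift.

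Relatedly, your accounting of the constants is not how the statement is actually obtained: the factor $\numunits^{7}$ and the sum over $\classtuple\in\classrep^{7}$ come out of the general statement \cite[Claim~4.1]{arXiv:1302.6151} (rank of the Picard group equals $7$), not from the single factor of $\numunits$ you extract from lifting a point of $\PP^3(K)$; as written, your unit/class bookkeeping (``one factor of $\numunits$'' plus an unproven ``unique up to the action of $\OO_K^\times$ on the seven free scaling directions'') is the part you flag as the main obstacle, but it is handled wholesale by the general machinery, whereas the step you declare routine --- checking the hypotheses of \cite[Lemma~4.3]{arXiv:1302.6151} for the base case and the coprimality verifications for blow-ups (1)--(3) --- is the real work of the paper's proof. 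Minor further inaccuracies: the nonvanishing needed to avoid the line $L$ is imposed through $\e_1,\dots,\e_7\neq 0$ (the choice of $\OO_{j*}$), not only through $\e_4\e_5\e_7\neq 0$, and the height scaling factor $u_\classtuple$ enters through the general setup rather than through an ad hoc ``norm factor'' argument.
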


\begin{proof}
  The lemma is a special case of \cite[Claim 4.1]{arXiv:1302.6151},
  which we prove by proving first \cite[Claim 4.2]{arXiv:1302.6151},
  starting from the curves $E_9^{(0)}= \{y_0=0\}$, $E_2^{(0)} :=
  \{y_1=0\}$, $E_8^{(0)} := \{y_2 = 0\}$, $E_{10}^{(0)} := \{-y_0^2y_1 -
  y_2^3 = 0\}$ in $\PP_K^2$, for the sequence of blow-ups
  \begin{enumerate}
  \item blow up $E_2^{(0)}\cap E_8^{(0)} \cap E_{10}^{(0)}$, giving $E_1^{(1)}$,
  \item blow up $E_1^{(1)}\cap E_2^{(1)} \cap E_{10}^{(1)}$, giving $E_3^{(2)}$,
  \item blow up $E_2^{(2)}\cap E_3^{(2)} \cap E_{10}^{(2)}$, giving $E_6^{(3)}$,
  \item blow up $E_6^{(3)} \cap E_{10}^{(3)}$, giving $E_5^{(4)}$,
  \item blow up $E_5^{(4)} \cap E_{10}^{(4)}$, giving $E_4^{(5)}$,
  \item blow up $E_4^{(5)} \cap E_{10}^{(5)}$, giving $E_7^{(6)}$,
  \end{enumerate}

  With the inverse $\pi\circ\rho^{-1} : \PP^2_K \dasharrow S$ of the
  projection $\rho\circ\pi^{-1} : S \rto \PP^2_K$, $(x_0 : \cdots : x_4)
  \mapsto (x_0 : x_2 : x_3)$ given by
  \begin{equation*}
    (y_0 : y_1 : y_2) \mapsto (y_0y_1^2 : -y_0^2y_1-y_2^3 : y_1^3 : y_1^2y_2)\text,
  \end{equation*}
  and the map $\Psi$ from \cite[Claim 4.2]{arXiv:1302.6151} sending $(\e_1,
  \ldots, \e_{10})$ to
  \begin{equation*}
    (\e_1\e_2^2\e_3^2\e_4\e_5^2\e_6^3\e_9, \e_{10}, \e_1^2\e_2^3\e_3^4\e_4^4\e_5^5\e_6^6\e_7^3, \e_1^2\e_2^2\e_3^3\e_4^2\e_5^3\e_6^4\e_7\e_8),
  \end{equation*}
  we see that the requirements of \cite[Lemma 4.3]{arXiv:1302.6151} are
  satisfied, so \cite[Claim 4.2]{arXiv:1302.6151} holds for $i=0$.

  We apply \cite[Remark~4.5]{arXiv:1302.6151} for steps (1), (2), (3).
  For (1), we define $\e_1'' \in C_1$ with
  $[I_2'+I_8'+I_{10}']=[C_1^{-1}]$ such that $I_1''=I_2'+I_8'+I_{10}'$. We
  use the relation $\e_1''^2\e_8''^3+\e_2''\e_9''^2+\e_{10}''=0$ to
  check the coprimality conditions for $\e_1'', \e_2'', \e_8'',
  \e_{10}''$, namely $\eI_2''+\eI_8''=\OO_K$ (this holds because of the
  relation and $\eI_2''+\eI_8''+\eI_{10}''=\OO_K$ by construction) and
  $\eI_1''+\eI_8''+\eI_{10}''=\OO_K$ (this holds because of the relation
  and $\eI_2''+\eI_8''+\eI_{10}''=\OO_K$ by construction and the
  coprimality condition $I_1''+I_9''=\OO_K$ provided by the proof of
  \cite[Lemma~4.4]{arXiv:1302.6151}).

  For (2), we define $\e_3'' \in C_2$ with $[I_1'+I_2'+I_{10}']=[C_2^{-1}]$ such
  that $I_3''=I_1'+I_2'+I_{10}'$. The relation is
  $\e_1''^2\e_3''\e_8''^3+\e_2''\e_9''^2+\e_{10}''=0$. We check the coprimality
  conditions $I_1''+I_2''=\OO_K$ (this holds because of the relation and
  $I_1''+I_2''+I_{10}''=\OO_K$ by construction) and $I_1''+I_{10}''=\OO_K$ (this
  holds because of the relation and $I_1''+I_2''=\OO_K$ as just shown and
  $I_1''+I_9''=\OO_K$ as before).

  For (3), we define $\e_6'' \in C_3$ with
  $[I_2'+I_3'+I_{10}']=[C_3^{-1}]$ such that
  $I_6''=I_2'+I_3'+I_{10}'$. The relation is
  $\e_1''^2\e_3''\e_8''^3+\e_2''\e_9''^2+\e_{10}''=0$. We check the
  coprimality conditions $I_2''+I_3''=\OO_K$ (this holds because of
  the relation and $I_2''+I_3''+I_{10}''=\OO_K$ by construction),
  $I_2''+I_{10}''=\OO_K$ (this holds because of the relation and
  $I_2''+I_3''=\OO_K$ as just shown $I_1''+I_2''=\OO_K$ as before and
  $I_2''+I_8''=\OO_K$ as before) and $I_3''+I_{10}''=\OO_K$ (this
  holds because of the relation and $I_2''+I_{10}''=\OO_K$ as just
  shown and $I_3''+I_9''=\OO_K$ by the proof of
  \cite[Lemma~4.4]{arXiv:1302.6151}).

  For (4), (5), (6), we can apply \cite[Lemma~4.4]{arXiv:1302.6151}. This proves
  \cite[Claim 6.2]{arXiv:1302.6151}, and we deduce \cite[Claim
  6.1]{arXiv:1302.6151} as in \cite[Lemma 9.1]{arXiv:1302.6151}.
\end{proof}

\section{Summations}\label{sec:summations}
\subsection{The first summation over $\e_9$ with dependent $\e_{10}$}
Let $\ee' := (\e_1, \ldots, \e_8)$ and $\eII' := (\eI_1, \ldots, \eI_8)$. Let
$\theta_0(\eII') := \prod_\p\theta_{0,\p}(J_\p(\eII'))$, with $J_\p(\eII') :=
\{j \in \{1, \ldots, 8\}\ :\  \p \mid \eI_j\}$ and
\begin{equation*}
  \theta_{0,\p}(J):=
  \begin{cases}
    1 &\text{ if } J = \emptyset, \{1\}, \{2\}, \{3\}, \{4\}, \{5\}, \{6\},
    \{7\}, \{8\}\\
    \ &\text{ or } J= \{1,3\}, \{1,8\}, \{2,6\}, \{3,6\}, \{4,5\}, \{4,7\}, \{5,6\}\\
    0 &\text{ otherwise.} 
  \end{cases}
\end{equation*}
Then $\theta_{0}(\eII') = 1$ if and only if $\eI_1$, $\ldots$, $\eI_8$
satisfy the coprimality conditions from \eqref{eq:E6_coprimality}, and
$\theta_{0}(\eII') = 0$ otherwise.

\begin{lemma}\label{lem:E6_first_summation}
  We have
  \begin{equation*}
    |M_\classtuple(B)| = \frac{2}{\sqrt{|\Delta_K|}}\sum_{\ee' \in \OO_{1*}
      \times \dots \times \OO_{8*}} \theta_9(\ee',
    \classtuple)V_9(\N\eI_1,\ldots,\N\eI_8; B) + O_\classtuple(B(\log B)^2),
  \end{equation*}
  where
  \begin{equation*}
    V_9(t_1, \ldots, t_8; B) :=  \frac{1}{t_4^2t_5t_7^3}\int_{(\sqrt{t_1}, \ldots,
      \sqrt{t_8},\e_9) \in \mathcal{R}(B)} \dd \e_9.
  \end{equation*}
  Moreover,
  \begin{equation*}
    \theta_9(\ee', \classtuple) := \sum_{\substack{\kc \mid \eI_4\eI_5\eI_6\\\kc + 
        \eI_2\eI_3 =
        \OO_K}}\frac{\mu_K(\kc)}{\N\kc}\tilde\theta_9(\eII',\kc)
    \sum_{\substack{\rho \bmod \kc\eI_4^2\eI_5\eI_7^3\\\rho\OO_K + \kc\eI_4^2\eI_5\eI_7^3 = \OO_K\\\rho^2 \equiv_{\kc\eI_4^2\eI_5\eI_7^3}         \e_8 A}}1\text,
  \end{equation*}
  with
  \begin{equation*}
  \tilde\theta_9(\eII',\kc) := \theta_0(\eII')\frac{\phi_K^*(\eI_1\eI_3\eI_6)}{\phi_K^*(\eI_6+
      \kc\eI_5)}\text.
  \end{equation*}
  Here, $A = A(\e_2, \ldots, \e_7) := -\e_3/(t^2\e_2)$, for a $t =
  t(\e_4,\e_5,\e_6,\e_7) \in K^\times$ such that
  $t\OO_1\OO_8\OO_9^{-1}$ is a prime ideal not dividing
  $\eI_4\eI_5\eI_6\eI_7$. Moreover, $\e_8 A$ is invertible
  modulo $\kc\eI_4^2\eI_5\eI_7^3$ whenever $\theta_0(\eII')\neq 0$.
\end{lemma}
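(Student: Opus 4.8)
\textbf{Proof plan for Lemma \ref{lem:E6_first_summation}.}
The strategy is to perform the innermost summation of $|M_\classtuple(B)|$ over the pair $(\e_9, \e_{10})$, where $\e_{10}$ is determined by $\e_9$ (and $\ee'$) through the torsor equation \eqref{eq:E6_torsor}. Fixing $\ee' = (\e_1, \ldots, \e_8) \in \OO_{1*}\times\cdots\times\OO_{8*}$ satisfying the relevant coprimality conditions among $\eI_1, \ldots, \eI_8$ (so that $\theta_0(\eII') \neq 0$), the torsor equation reads $\e_4^2\e_5\e_7^3\e_{10} = -(\e_1^2\e_3\e_8^3 + \e_2\e_9^2)$, so $\e_{10}$ exists in $\OO_{10}$ precisely when $\e_1^2\e_3\e_8^3 + \e_2\e_9^2 \equiv 0$ modulo the ideal $\eI_4^2\eI_5\eI_7^3$ (after accounting for the ideal classes via the $\OO_j$'s), together with the remaining coprimality conditions involving $E_9$ and $E_{10}$. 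First I would translate the congruence $\e_1^2\e_3\e_8^3 + \e_2\e_9^2 \equiv 0$ into the form $\rho^2 \equiv \e_8 A \bmod{\kc\eI_4^2\eI_5\eI_7^3}$ for $\rho = \e_9/(\text{unit})$ after extracting a suitable common factor $\kc$; the factor $t$ and the quantity $A = -\e_3/(t^2\e_2)$ are introduced precisely to make $\e_1^2\e_3\e_8^3$ into a square times $\e_8 A$ up to the ideal structure, and one checks $\e_8 A$ is invertible modulo $\kc\eI_4^2\eI_5\eI_7^3$ under $\theta_0(\eII')\neq 0$ using the coprimality conditions encoded in $\theta_{0,\p}$.

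The second step is to handle the coprimality conditions linking $\e_9$ (hence $E_9$) and $E_{10}$ to the other coordinates. The conditions $\eI_9 + \eI_j = \OO_K$ and $\eI_{10} + \eI_j = \OO_K$ for the nonadjacent vertices $E_j$ in Figure~\ref{fig:E6_dynkin} are detected by a Möbius inversion over a common divisor; this is the source of the sum over $\kc \mid \eI_4\eI_5\eI_6$ with $\kc + \eI_2\eI_3 = \OO_K$ and the Möbius weight $\mu_K(\kc)/\N\kc$, together with the correction factor $\phi_K^*(\eI_1\eI_3\eI_6)/\phi_K^*(\eI_6 + \kc\eI_5)$ recording the density of admissible residues of $\e_9$ modulo the product of the relevant prime ideals. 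I would carry this out prime by prime: for each $\p$, the local condition on $\e_9$ modulo $\p$ is either "coprime to $\p$" or "lies in a congruence class determined by the torsor equation", and multiplying the local densities yields $\tilde\theta_9(\eII', \kc)$. The congruence $\rho^2 \equiv_{\kc\eI_4^2\eI_5\eI_7^3} \e_8 A$ then counts exactly the residues $\rho \bmod \kc\eI_4^2\eI_5\eI_7^3$ coprime to the modulus that are compatible with the torsor equation, which is the last factor in $\theta_9(\ee', \classtuple)$.

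The third step is the asymptotic count of $\e_9 \in \OO_9$ with $\eI_9 = \e_9\OO_9^{-1}$ in a fixed congruence class modulo $\kc\eI_4^2\eI_5\eI_7^3$ and satisfying the height conditions \eqref{eq:E6_height_1}--\eqref{eq:E6_height_4} (with $(\e_1, \ldots, \e_8)$ fixed). Since $\e_9$ ranges over a lattice (a translate of an ideal) intersected with a region in $\CC \cong \RR^2$ cut out by the height inequalities — note \eqref{eq:E6_height_1}, \eqref{eq:E6_height_2} do not involve $\e_9$, \eqref{eq:E6_height_3} is a disc condition on $\e_9$, and \eqref{eq:E6_height_4} after the substitution is again essentially a disc/annulus condition — this is a lattice-point count that the Lipschitz principle (or a two-dimensional Euclidean lattice point estimate as in \cite[Section~2]{arXiv:1302.6151}) resolves, giving the main term $\frac{2}{\sqrt{|\Delta_K|}}$ times the volume $V_9(\N\eI_1, \ldots, \N\eI_8; B)$ with an error proportional to the perimeter of the region divided by the first successive minimum of the lattice. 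Summing this error over $\ee' \in \OO_{1*}\times\cdots\times\OO_{8*}$ in the range permitted by the height conditions, and using standard bounds on sums of divisor-type functions over ideals of $\OO_K$ (together with the convergence of $\sum_\kc |\mu_K(\kc)|/\N\kc$ over $\kc \mid \eI_4\eI_5\eI_6$), produces the claimed total error $O_\classtuple(B(\log B)^2)$.

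\textbf{Main obstacle.} The principal difficulty is bookkeeping rather than analysis: correctly identifying the modulus $\kc\eI_4^2\eI_5\eI_7^3$ and the residue $\e_8 A$, and verifying that the Möbius sum over $\kc$ together with the density corrections $\phi_K^*(\eI_1\eI_3\eI_6)/\phi_K^*(\eI_6+\kc\eI_5)$ exactly reproduces the product of local coprimality conditions \eqref{eq:E6_coprimality} involving $E_9$ and $E_{10}$. In particular, the interaction between the coprimality conditions $\eI_9 + \eI_5 = \eI_9 + \eI_6 = \OO_K$ and the condition $\eI_9 + \eI_2 = \eI_9 + \eI_3 = \OO_K$ (which, via the torsor equation, partly follow automatically and partly must be imposed) requires care, exactly as in the analogous bookkeeping in \cite[Section~5]{arXiv:1302.6151} and in the original treatment over $\QQ$ in \cite{MR2332351}. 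Once this combinatorial identification is in place, controlling the error term is routine given the lattice-point estimate and the divisor bounds over $\OO_K$.
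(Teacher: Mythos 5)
Your overall structure matches the paper's route: the per-$\ee'$ count of $(\e_9,\e_{10})$ (congruence $\rho^2\equiv_{\kc\eI_4^2\eI_5\eI_7^3}\e_8A$ from the torsor equation, M\"obius inversion over $\kc$ for the coprimality conditions involving $E_9,E_{10}$, lattice-point count in $\CC$) is exactly what the paper delegates to the general first-summation result \cite[Proposition 5.3]{arXiv:1302.6151}, so that part of your plan is sound. The genuine gap is in the last step, which you declare ``routine'': the summation of the per-$\ee'$ error over $\e_1,\dots,\e_8$ is the actual content of the paper's proof, and it does not follow from the bound you indicate. If you bound the range of $\e_9$ by the disc condition \eqref{eq:E6_height_3} (the condition you single out), the per-$\ee'$ error is of size about $(B/\N(\eI_1\eI_2^2\eI_3^2\eI_4\eI_5^2\eI_6^3))^{1/2}\N(\eI_4^2\eI_5\eI_7^3)^{-1/2}$, which carries no decay in $\N\eI_8$; summing over the roughly $B/\N(\eI_1^2\eI_2^2\eI_3^3\eI_4^2\eI_5^3\eI_6^4\eI_7)$ admissible values of $\e_8$ from \eqref{eq:E6_height_2} and then over $\eI_1,\dots,\eI_7$ produces an error of order $B^{3/2}$, which swamps the main term $\asymp B(\log B)^6$, let alone the claimed $O_\classtuple(B(\log B)^2)$.

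The missing idea is the case distinction on $\e_8$ together with the right radius bound. For $\e_8\neq 0$ one must bound the two balls containing the admissible $\e_9$ using the quartic condition \eqref{eq:E6_height_4}: taking the geometric mean of the two expressions in the minimum of \cite[Lemma 3.5, (1)]{arXiv:1302.6151} gives a radius $\ll_\classtuple (B^3\N(\eI_4^2\eI_5\eI_7^3)^3/\N(\eI_1^2\eI_2^2\eI_3\eI_8^3))^{1/8}$, whose decay $\N\eI_8^{-3/8}$ is precisely what makes the sum over $\e_8$ converge to $B(\log B)^2$ (the ``$+1$'' term, weighted by the $2^{\omega_K(\cdot)}$ factors counting residues $\rho$ and divisors $\kc$, is summed separately using \eqref{eq:E6_second_height_cond_ideals}). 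The degenerate case $\e_8=0$, where this bound is unavailable, must then be handled separately via \eqref{eq:E6_height_3} and \eqref{eq:E6_first_height_cond_ideals}, giving only $B^{3/4}\log B$. Without this balancing of the height conditions your error summation fails, so the assessment that the analysis is routine and only the combinatorial bookkeeping is delicate inverts where the difficulty actually lies.
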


\begin{proof}
  For fixed $\e_1, \dots, \e_8$, the first summation estimates the
  number of $\e_9,\e_{10}$ with $(\e_1, \dots, \e_{10}) \in
  M_\classtuple(B)$. This is considered in general in
  \cite[Proposition 5.3]{arXiv:1302.6151}, which we apply with the
  following data: $(A_1, A_2, A_0) = (3,1,8)$, $(B_1, B_0)=(2,9)$,
  $(C_1, C_2, C_3, C_0) = (5,4,7,10)$, $D = 6$, and $u_\classtuple B$
  instead of $B$. Moreover, we define $\At =
  \At(\e_4,\e_5,\e_6,\e_7):=\e_1\e_8t$ and $\Ao =
  \Ao(\e_4,\e_5,\e_6,\e_7):=\e_3\e_8t^{-2}$, so $\e_1^2\e_3\e_8^3 =
  \Ao\At^2$. We obtain a main term, which is the one given in the
  statement of this lemma, and an error term, which we still need to
  sum over $\e_1, \dots, \e_8$.

  Let us consider the error term. For given $\ee'$, the set of all
  $\e_9$ with $(\e_1, \ldots, \e_9) \in \mathcal{R}(u_\classtuple B)$
  is contained in two balls of radius
  \begin{equation*}
    R(\ee'; u_\classtuple B) \ll_\classtuple
    \begin{cases}
      (B^3\N(\eI_4^2\eI_5\eI_7^3)^3/\N(\eI_1^2\eI_2^2\eI_3\eI_8^3))^{1/8}&\text{ if }\e_8\neq 0\\
      (B/\N(\eI_1\eI_2^2\eI_3^2\eI_4\eI_5^2\eI_6^3))^{1/2} &\text{ if }\e_8 =
      0\text.
    \end{cases}
  \end{equation*}
  If $\eta_8 \neq 0$ this follows from taking the geometric mean of
  both expressions in the minimum in \cite[Lemma 3.5,
  (1)]{arXiv:1302.6151} applied to \eqref{eq:E6_height_4}. If $\eta_8
  = 0$ then it follows from \eqref{eq:E6_height_3}.  Thus, the error
  term is
  \begin{equation}\label{eq:first_error}
    \ll \sum_{\ee'\text{, }\eqref{eq:E6_first_height_cond_ideals}\text{, }\eqref{eq:E6_second_height_cond_ideals}}2^{\omega_K(\eI_1\eI_3\eI_6)+\omega_K(\eI_4\eI_5\eI_6)+\omega_K(\eI_4\eI_5\eI_6\eI_7)}\left(\frac{R(\ee'; u_\classtuple B)}{\N(\eI_4^2\eI_5\eI_7^3)^{1/2}}+1\right)\text,
  \end{equation}
  where, using \eqref{eq:E6_height_1} and \eqref{eq:E6_height_2}, the
  sum runs over all $\ee' \in \OO_{1*}\times\cdots\times\OO_{7*}$ with
  \begin{align}\label{eq:E6_first_height_cond_ideals}
    \N(\eI_1^2\eI_2^3\eI_3^4\eI_4^4\eI_5^5\eI_6^6\eI_7^3) &\leq B \text{, and }\\
    \N(\eI_1^2\eI_2^2\eI_3^3\eI_4^2\eI_5^3\eI_6^4\eI_7\eI_8) &\leq B
    \text.\label{eq:E6_second_height_cond_ideals}
  \end{align}

  The sum of the first term of (\ref{eq:first_error}) over all $\ee'$ with $\e_8 \neq 0$ is bounded by
  \begin{align*}
    &\ll_\classtuple \sum_{\eII'\text{, }\eqref{eq:E6_second_height_cond_ideals}}\frac{2^{\omega_K(\eI_1\eI_3\eI_6)+\omega_K(\eI_4\eI_5\eI_6)+\omega_K(\eI_4\eI_5\eI_6\eI_7)}B^{3/8}}{(\N\eI_1^2\N\eI_2^2\N\eI_3\N\eI_4^2\N\eI_5\N\eI_7^3\N\eI_8^3)^{1/8}}\\
    &\ll\sum_{\substack{\eI_1, \dots, \eI_7\\\N\eI_j\leq B}}\frac{2^{\omega_K(\eI_1\eI_3\eI_6)+\omega_K(\eI_4\eI_5\eI_6)+\omega_K(\eI_4\eI_5\eI_6\eI_7)}B}{\N\eI_1^{3/2}\N\eI_2^{3/2}\N\eI_3^2\N\eI_4^{3/2}\N\eI_5^2\N\eI_6^{5/2}\N\eI_7}\\
    &\ll B(\log B)^2,
  \end{align*}
  and the sum of the second term of (\ref{eq:first_error}) over all $\ee'$ with $\e_8 \neq 0$ is bounded by
  \begin{align*}
    &\ll_\classtuple \sum_{\eII'\text{, }\eqref{eq:E6_second_height_cond_ideals}}2^{\omega_K(\eI_1\eI_3\eI_6)+\omega_K(\eI_4\eI_5\eI_6)+\omega_K(\eI_4\eI_5\eI_6\eI_7)}\\
    &\ll\sum_{\substack{\eI_1, \dots, \eI_7\\\N\eI_j\leq B}}\frac{2^{\omega_K(\eI_1\eI_3\eI_6)+\omega_K(\eI_4\eI_5\eI_6)+\omega_K(\eI_4\eI_5\eI_6\eI_7)}B}{\N\eI_1^2\N\eI_2^2\N\eI_3^3\N\eI_4^2\N\eI_5^3\N\eI_6^4\N\eI_7}\\
    &\ll B(\log B)^2.
  \end{align*}

  The sum of the first term of (\ref{eq:first_error}) over all $\ee'$ with $\e_8 = 0$ is bounded by
  \begin{align*}
    &\ll_\classtuple \sum_{\substack{\eI_1, \ldots,
        \eI_7\\\eqref{eq:E6_first_height_cond_ideals}}}\frac{2^{\omega_K(\eI_1\eI_3\eI_6)+\omega_K(\eI_4\eI_5\eI_6)+\omega_K(\eI_4\eI_5\eI_6\eI_7)}B^{1/2}}{(\N\eI_1\N\eI_2^2\N\eI_3^2\N\eI_4^3\N\eI_5^3\N\eI_6^3\N\eI_7^3)^{1/2}}\\
    &\ll\sum_{\substack{\eI_2, \dots, \eI_7\\\N\eI_j \leq
        1}}\frac{2^{\omega_K(\eI_3\eI_6)+\omega_K(\eI_4\eI_5\eI_6)+\omega_K(\eI_4\eI_5\eI_6\eI_7)}B^{3/4}\log
        B}{\N\eI_2^{7/4}\N\eI_3^2\N\eI_4^{5/2}\N\eI_5^{11/4}\N\eI_6^3\N\eI_7^{9/4}}
        \\
    &\ll B^{3/4}\log B,
  \end{align*}
  and the sum of the second term of (\ref{eq:first_error}) over all $\ee'$ with $\e_8 = 0$ is bounded by
  \begin{align*}
    &\ll_\classtuple \sum_{\substack{\eI_1, \ldots,
        \eI_7\\\eqref{eq:E6_first_height_cond_ideals}}}2^{\omega_K(\eI_1\eI_3\eI_6)+\omega_K(\eI_4\eI_5\eI_6)+\omega_K(\eI_4\eI_5\eI_6\eI_7)}\\
    &\ll\sum_{\substack{\eI_2, \dots, \eI_7\\\N\eI_j \leq
        1}}
        \frac{2^{\omega_K(\eI_3\eI_6)+\omega_K(\eI_4\eI_5\eI_6)+\omega_K(\eI_4\eI_5\eI_6\eI_7)}B^{1/2}\log
          B}{\N\eI_2^{3/2}\N\eI_3^2\N\eI_4^2\N\eI_5^{5/2}\N\eI_6^3\N\eI_7^{3/2}}\\
    &\ll B^{1/2}\log B.\qedhere
  \end{align*}
\end{proof}

\subsection{The second summation over $\e_8$.}

\begin{lemma}\label{lem:E6_second_summation}
  Write $\ee'' := (\e_1, \dots, \e_7)$ and $\OO'':=\OO_{1*}\times \cdots
  \times \OO_{7*}$. We have
  \begin{align*}
    M_\classtuple(B) &=\left(\frac{2}{\sqrt{|\Delta_K|}}\right)^2
    \sum_{\ee''\in \OO''}
    \mathcal{A}(\theta_9'(\eII'),\eI_8)V_{98}(\N\eI_1,\ldots,\N\eI_7; B)\\ &+
    O_{\classtuple}(B(\log B)^2),
  \end{align*}
  where, for $t_1, \ldots, t_7 \geq 1$,
  \begin{equation*}
    V_{98}(t_1, \ldots, t_7; B) := \frac{\pi}{t_4^2t_5t_7^3}\int\limits_{\substack{(\sqrt{t_1},
        \ldots, \sqrt{t_8}, \e_9) \in \mathcal{R}(B)}} \dd t_8 \dd \e_9\text,
  \end{equation*}
  with a real variable $t_8$ and a complex variable $\e_9$.
\end{lemma}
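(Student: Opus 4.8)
The plan is to deduce this from Lemma~\ref{lem:E6_first_summation} by carrying out, for each fixed $\ee'' = (\e_1, \dots, \e_7)$, the summation over $\e_8$ by means of Proposition~\ref{prop:second_summation_improved}. First I would expand $\theta_9(\ee', \classtuple)$ according to its definition and interchange the (finite) sum over $\kc$ with the sum over $\e_8$; it then suffices to treat, for each squarefree $\kc \mid \eI_4\eI_5\eI_6$ with $\kc + \eI_2\eI_3 = \OO_K$, the sum
\begin{equation*}
  \sum_{\e_8 \in \OO_8^{\neq 0}}\tilde\theta_9(\eII', \kc)\Bigl(\,\sum_{\substack{\rho \bmod \kc\eI_4^2\eI_5\eI_7^3\\\rho\OO_K + \kc\eI_4^2\eI_5\eI_7^3 = \OO_K\\\rho^2 \equiv_{\kc\eI_4^2\eI_5\eI_7^3}\e_8 A}}\!\!\!1\Bigr)V_9(\N\eI_1, \dots, \N\eI_8; B)\text.
\end{equation*}
This has exactly the shape of the sum $S(t_1, t_2)$ in Proposition~\ref{prop:second_summation_improved}, with $\OO := \OO_8$, $\qqq := \kc\eI_4^2\eI_5\eI_7^3$, the element $A$ from Lemma~\ref{lem:E6_first_summation}, the arithmetic weight $\vartheta \colon \aaa \mapsto \tilde\theta_9(\eI_1, \dots, \eI_7, \aaa, \kc)$, and $g \colon t_8 \mapsto V_9(\N\eI_1, \dots, \N\eI_7, t_8; B)$ as a function of the real variable $t_8$; the range of summation is $1 \le \N\eI_8 \le t_2$ for the $t_2$ permitted by the height conditions, and $\e_8 = 0$ is treated separately.

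Next I would verify the hypotheses of Proposition~\ref{prop:second_summation_improved}. The requirement $v_\p(A\OO_8) = 0$ for $\p \mid \qqq$ holds on the support of the summand by the final assertion of Lemma~\ref{lem:E6_first_summation}. Inspecting the local factors $\theta_{0,\p}$ shows that $\tilde\theta_9(\eI_1, \dots, \eI_7, \aaa, \kc)$ equals a constant depending only on $\eI_1, \dots, \eI_7$ and $\kc$ (of size $\ll_\classtuple \N(\eI_1\cdots\eI_7)^\epsilon$) times the indicator of $\gcd(\aaa, \eI_2\cdots\eI_7) = \OO_K$; hence $\vartheta * \mu_K$ is supported on the divisors of $\prod_{\p \mid \eI_2\cdots\eI_7}\p$, so \eqref{eq:second_sum_thetabound} holds with $C = 0$ and $c_\vartheta \ll_\classtuple 2^{\omega_K(\eI_2\cdots\eI_7)}\N(\eI_1\cdots\eI_7)^\epsilon$. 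For $g$, up to the factor $\N(\eI_4^2\eI_5\eI_7^3)^{-1}$ it is the area of the region in the $\e_9$-plane defined by \eqref{eq:E6_height_3} and \eqref{eq:E6_height_4}; this region is bounded by $O(1)$ smooth arcs, so $g$ is piecewise $C^1$ and monotone with $R(g) = O(1)$, and a volume estimate gives $|g(t_8)| \ll_\classtuple c_g t_8^{a}$ with $a \le 0$. The key point is that \eqref{eq:E6_height_4} confines $\e_9^2$ to a disc of bounded radius centred at $-\e_1^2\e_3\e_8^3/\e_2$, whose modulus grows like a positive power of $t_8$, so the preimage of that disc under squaring — and hence the $\e_9$-region — shrinks with $t_8$; balancing this against \eqref{eq:E6_height_3} (splitting the $t_8$-range at the transition point if convenient, with $a = 0$ below and $a = -3/2$ above) yields a bound on $c_g$ decaying suitably in each of $\N\eI_1, \dots, \N\eI_7$.

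With these inputs, applying Proposition~\ref{prop:second_summation_improved} (for each $\kc$ and each $\ee''$) and summing the main terms, the identity $\int_0^\infty V_9(\N\eI_1, \dots, \N\eI_7, t_8; B)\dd t_8 = \pi^{-1}V_{98}(\N\eI_1, \dots, \N\eI_7; B)$ together with the definitions of $\theta_9'$ and of $\mathcal A$ turns the main term into $(2/\sqrt{|\Delta_K|})^2\sum_{\ee''}\mathcal A(\theta_9'(\eII'),\eI_8)V_{98}(\N\eI_1, \dots, \N\eI_7; B)$; the term $\e_8 = 0$ (which is non-trivial only when $\kc\eI_4^2\eI_5\eI_7^3 = \OO_K$) and the difference between $\int_0^\infty$ and the integral over the admissible $t_8$-range are easily bounded by $O_\classtuple(B(\log B)^2)$. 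The last sentence of the lemma, replacing $t_1 < \N(z\OO^{-1})$ by $t_1 \le \N(z\OO^{-1})$ (here $1 \le \N\eI_8$), is the corresponding clause of Proposition~\ref{prop:second_summation_improved}.

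The main obstacle is the total error. Proposition~\ref{prop:second_summation_improved} contributes, for fixed $\kc$ and $\eI_1, \dots, \eI_7$ (and each piece of the $t_8$-range), $O\bigl(c_\vartheta c_g(\N\qqq^{1/3+\epsilon}\mathcal E_1 + 2^{\omega_K(\qqq)}\N\qqq^{1/2}\mathcal E_2)\bigr)$ with $\qqq = \kc\eI_4^2\eI_5\eI_7^3$. Summing over the squarefree $\kc \mid \eI_4\eI_5\eI_6$ costs only $\N(\eI_4\eI_5\eI_6)^\epsilon$ and lets one replace $\N\qqq$ by $\N(\eI_4^3\eI_5^2\eI_6\eI_7^3)$; it then remains to sum over $\eI_1, \dots, \eI_7$, whose norms are constrained by \eqref{eq:E6_height_1} and \eqref{eq:E6_height_2}. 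One must choose the $c_g$-bound (equivalently the interpolation parameter between \eqref{eq:E6_height_3} and \eqref{eq:E6_height_4}, i.e.\ the exponent $a$) so that the decay of $c_g$ in $\N\eI_1, \dots, \N\eI_7$ absorbs $\N\qqq^{1/3+\epsilon}$, $\N\qqq^{1/2}$ and the factor $\sup t_2^{a+1/3}$ (with $t_2 \asymp B/\N(\eI_1^2\eI_2^2\eI_3^3\eI_4^2\eI_5^3\eI_6^4\eI_7)$) coming from $\mathcal E_1$; then every remaining one-variable Dirichlet sum is at most a power of $\log B$, giving $O_\classtuple(B(\log B)^2)$. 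It is exactly here that the balanced error term of Proposition~\ref{prop:second_summation_improved} — ultimately the estimate $t^{1/3}\N\qqq^{1/3+\epsilon}$ of Theorem~\ref{thm:circle_problem} in place of the naive $t^{1/2}\N\qqq^{1/2}$ — is indispensable; the weaker bound of \cite[Proposition~6.1]{arXiv:1302.6151} would leave a sum that does not converge to $O(B(\log B)^2)$.
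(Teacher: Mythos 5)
Your proposal follows essentially the same route as the paper: expand $\theta_9$ and pull the (finite) sum over $\kc$ outside, apply Proposition~\ref{prop:second_summation_improved} to the sum over $\e_8$ with $\qqq=\kc\eI_4^2\eI_5\eI_7^3$, $\vartheta(\eI_8)=\tilde\theta_9(\eII',\kc)$ (so $C=0$, $c_\vartheta=2^{\omega_K(\eI_2\eI_3\eI_4\eI_5\eI_6\eI_7)}$) and $g(t)=V_9(\N\eI_1,\dots,\N\eI_7,t;B)$, then recombine the main terms into $\mathcal{A}(\theta_9'(\eII'),\eI_8)V_{98}$ and sum the error terms over $\kc$ and over the $\eI_j$ constrained by \eqref{eq:E6_height_1}. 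The only noteworthy difference is that the paper needs no splitting or interpolation in the $t_8$-range: it simply takes $a=0$ with $c_g\ll B^{1/2}\N(\eI_2\eI_4^2\eI_5\eI_7^3)^{-1/2}$ coming from \eqref{eq:E6_height_4} and the support bound $t_2=B/\N(\eI_1^2\eI_2^2\eI_3^3\eI_4^2\eI_5^3\eI_6^4\eI_7)$ coming from \eqref{eq:E6_height_2}, and the explicit exponent bookkeeping that you left as an assertion does go through with these choices, yielding $O(B)+O(B\log B)$ for the new error contributions.
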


\begin{proof}
  We follow the strategy described in \cite[Section 6]{arXiv:1302.6151} in the
  case $b_0 \geq 2$, except that we use Proposition
  \ref{prop:second_summation_improved} instead of \cite[Proposition 6.1]{arXiv:1302.6151}.  We write
  \begin{equation*}
    M_\classtuple(B) = \frac{2}{\sqrt{|\Delta_K|}}\sum_{\ee'' \in \OO''}
    \sum_{\substack{\kc \mid \eI_4\eI_5\eI_6\\\kc+\eI_2\eI_3 =
        \OO_K}}\frac{\mu(\kc)}{\N\kc}\Sigma + O_\classtuple(B(\log B)^2),
  \end{equation*}
  where
  \begin{equation*}
    \Sigma := \sum_{\substack{\e_8 \in \OO_{8*}}}\vartheta(\eI_8)\sum_{\substack{\rho \bmod        \kc\eI_4^2\eI_5\eI_7^3\\\rho\OO_K + \kc\eI_4^2\eI_5\eI_7^3 = \OO_K \\\rho^2 \equiv_{\kc\eI_4^2\eI_5\eI_7^3} \e_8 A}}g(\N\eI_8)\text,
  \end{equation*}
  with $\vartheta(\eI_8) := \tilde\theta_9(\eII', \kc)$ and $g(t) :=
  V_9(\N\eI_1,\ldots,\N\eI_7, t; B)$.
 
  By \cite[Lemma 5.5, Lemma 2.2]{arXiv:1302.6151}, the function $\vartheta$
  satisfies (\ref{eq:second_sum_thetabound}) with $C:=0$, $c_\vartheta :=
  2^{\omega_K(\eI_2\eI_3\eI_4\eI_5\eI_6\eI_7)}$.  By \eqref{eq:E6_height_2}, we
  have $g(t)=0$ if $t > t_2 :=
  B/\N(\eI_1^2\eI_2^2\eI_3^3\eI_4^2\eI_5^{3}\eI_6^4\eI_7)$, and, using Lemma
  \cite[Lemma 3.5, (2)]{arXiv:1302.6151} applied to \eqref{eq:E6_height_4}, we
  have $g(t) \ll B^{1/2}/(\N\eI_2^{1/2}\N\eI_4\N\eI_5^{1/2}
  \N\eI_7^{3/2})$. By Proposition \ref{prop:second_summation_improved}, we
  obtain
  \begin{align*}
    \Sigma &= \frac{2
      \pi}{\sqrt{|\Delta_K|}}\phi_K^*(\kc\eI_4^2\eI_5\eI_7^3)\mathcal{A}(\vartheta(\aaa),
    \aaa, \kc\eI_4^2\eI_5\eI_7^3)\int_{t \geq 1}g(t)\dd t + O(\vartheta(0)g(0))\\
    &+
    O\left(\frac{2^{\omega_K(\eI_2\eI_3\eI_4\eI_5\eI_6\eI_7)}B^{1/2}}{\N(\eI_2\eI_4^2\eI_5\eI_7^3)^{1/2}}\left(\frac{B^{1/3}\N(\kc\eI_4^2\eI_5\eI_7^3)^{1/3+\epsilon}}{\N(\eI_1^{2}\eI_2^{2}\eI_3^{3}\eI_4^{2}\eI_5^3\eI_6^{4}\eI_7)^{1/3}}
        + \frac{2^{\omega_K(\kc\eI_4\eI_5\eI_7)}\log
        B}{\N(\kc\eI_4^2\eI_5\eI_7^3)^{-1/2}}\right)\right)\text.
  \end{align*}
  Clearly, $\int_0^1g(t)\dd t$ and the error term $O(\vartheta(0)g(0))$ are
  dominated by the other error term.  Using \cite[Lemma 6.3]{arXiv:1302.6151}
  we see that the main term in the lemma is correct.

  For the error term, we may sum over $\kc$ and over the ideals $\eI_j$ instead
  of the $\e_j$, since $|\OO_K^\times| < \infty$. By \eqref{eq:E6_height_1}, it
  suffices to sum over $\kc$ and all $(\eI_1, \ldots, \eI_7)$ satisfying
  \begin{equation}\label{eq:E6_height_1_ideals}
    \N\eI_1^2\N\eI_2^3\N\eI_3^4\N\eI_4^4\N\eI_5^5\N\eI_6^6\N\eI_7^3 \leq B\text.
  \end{equation}
  Thus, the total error is bounded by the sum of
  \begin{align*}
    &\sum_{\substack{\eI_1, \ldots,
        \eI_7\\\eqref{eq:E6_height_1_ideals}}}\frac{2^{2\omega_K(\eI_2\eI_3\eI_4\eI_5\eI_6\eI_7)}B^{5/6}}{\N\eI_1^{2/3}\N\eI_2^{7/6}\N\eI_3\N\eI_4^{1-2\epsilon}\N\eI_5^{7/6-\epsilon}\N\eI_6^{4/3}\N\eI_7^{5/6-3\epsilon}}
      \\
    &\ll \sum_{\substack{\eI_2, \ldots, \eI_7\\\N\eI_j \leq B}}\frac{2^{2\omega_K(\eI_2\eI_3\eI_4\eI_5\eI_6\eI_7)}B}{\N\eI_2^{5/3}\N\eI_3^{5/3}\N\eI_4^{5/3-2\epsilon}\N\eI_5^{2-\epsilon}\N\eI_6^{7/3}\N\eI_7^{4/3-3\epsilon}}\\
    &\ll B
  \end{align*}
  (whenever $\epsilon < 1/9$; we choose $\epsilon := 1/18$) and
  \begin{align*}
    &\sum_{\substack{\eI_1, \ldots,
        \eI_7\\\eqref{eq:E6_height_1_ideals}}} \frac{2^{3\omega_K(\eI_2\eI_3\eI_4\eI_5\eI_6\eI_7)}B^{1/2}\log B}{\N\eI_2^{1/2}}\\
    &\ll \sum_{\substack{\eI_2, \ldots, \eI_7\\\N\eI_j \leq B}}\frac{2^{3\omega_K(\eI_2\eI_3\eI_4\eI_5\eI_6\eI_7)}B\log B}{\N\eI_2^{2}\N\eI_3^2\N\eI_4^{2}\N\eI_5^{5/2}\N\eI_6^3\N\eI_7^{3/2}}\\
    &\ll B\log B.\qedhere
  \end{align*}
\end{proof}

\begin{lemma}\label{lem:E6_second_summation_ideals}
  If $\eII''$ runs over all seven-tuples $(\eI_1, \ldots, \eI_7)$ of
  nonzero ideals of $\OO_K$ then we have
  \begin{multline*}
    N_{U,H}(B) =
    \left(\frac{2}{\sqrt{|\Delta_K|}}\right)^2\sum_{\eII''}\mathcal{A}(\theta_9'(\eII'),\eI_8)V_{98}(\N\eI_1,
    \ldots, \N\eI_7; B) + O(B(\log B)^5)\text.
  \end{multline*}
\end{lemma}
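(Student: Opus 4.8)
The plan is to combine the class-number decomposition of Lemma~\ref{lem:E6_passage_to_torsor} with the single-class asymptotics of Lemma~\ref{lem:E6_second_summation}, and then re-express the inner sum over elements as a sum over ideals. Substituting the formula for each $|M_\classtuple(B)|$ into Lemma~\ref{lem:E6_passage_to_torsor} gives
\begin{equation*}
  N_{U,H}(B) = \frac{1}{\numunits^7}\left(\frac{2}{\sqrt{|\Delta_K|}}\right)^2 \sum_{\classtuple\in\classrep^7}\ \sum_{\ee''\in\OO''}\mathcal{A}(\theta_9'(\eII'),\eI_8)\,V_{98}(\N\eI_1,\ldots,\N\eI_7;B) \;+\; \frac{1}{\numunits^7}\sum_{\classtuple\in\classrep^7}O_\classtuple(B(\log B)^2).
\end{equation*}
Since $\classrep^7$ is a finite set and the implied constants are allowed to depend on $K$, the error contribution is $O(B(\log B)^2)$, hence a fortiori $O(B(\log B)^5)$ for $B\ge 3$.

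The one genuine point is to check that the summand of the main term depends on $\ee''=(\e_1,\ldots,\e_7)$ only through the nonzero integral ideals $\eI_j=\e_j\OO_j^{-1}$, $1\le j\le 7$. This is clear for $V_{98}(\N\eI_1,\ldots,\N\eI_7;B)$, and for $\mathcal{A}(\theta_9'(\eII'),\eI_8)$ — in which $\eI_8$ is the summation variable of the $\mathcal{A}$-construction of Section~\ref{sec:second_sum} — it follows by inspecting the definitions in Lemma~\ref{lem:E6_first_summation} and the proof of Lemma~\ref{lem:E6_second_summation}: the auxiliary data $t$, $A=A(\e_2,\ldots,\e_7)$ and $\e_8$ enter $\theta_9$ only through the congruence $\rho^2\equiv\e_8 A$, and that congruence has been summed over in the second summation, so that what is left is assembled from $\theta_0$, $\phi_K^*$, $\mu_K$, ideal norms and divisibility relations among $\eI_1,\ldots,\eI_7$ and the auxiliary ideal $\kc\mid\eI_4\eI_5\eI_6$. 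Write the common value as $F(\eI_1,\ldots,\eI_7;B)$. For a fixed $\classtuple$, the map $\e_j\mapsto\e_j\OO_j^{-1}$ sends $\OO_j^{\neq 0}$ onto the set of nonzero integral ideals in the class $[\OO_j^{-1}]$, and each such ideal has exactly $\numunits$ preimages since two generators of a principal ideal differ by a unit. Hence the inner double sum equals $\numunits^7\sum F(\eI_1,\ldots,\eI_7;B)$, where now $(\eI_1,\ldots,\eI_7)$ ranges over the seven-tuples of nonzero integral ideals with $\eI_j\in[\OO_j^{-1}]$.

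It remains to carry out the sum over $\classtuple$. From the explicit formulas \eqref{eq:E6_def_Oj} one recovers the ideal classes $[C_i]$ successively, namely $[C_6]=[\OO_7]$ and then $[C_5],[C_4],[C_3],[C_2],[C_1],[C_0]$ in turn from $[\OO_4],[\OO_5],[\OO_6],[\OO_3],[\OO_1],[\OO_2]$ together with the already determined classes; thus $\classtuple\mapsto([\OO_1^{-1}],\ldots,[\OO_7^{-1}])$ is injective, and since $\classrep$ represents the ideal class group bijectively and $|\classrep^7|=h_K^7=|\mathrm{Cl}(K)^7|$, it is in fact a bijection of $\classrep^7$ onto $\mathrm{Cl}(K)^7$. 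Summing the previous display over $\classtuple\in\classrep^7$ therefore drops the class constraint on $(\eI_1,\ldots,\eI_7)$, leaving an unrestricted sum over all seven-tuples $\eII''$ of nonzero integral ideals of $\OO_K$ (this sum is finite, as $V_{98}$ vanishes unless the norms $\N\eI_j$ are bounded in terms of $B$). The factor $\numunits^7$ cancels the prefactor $1/\numunits^7$, and we arrive at
\begin{equation*}
  N_{U,H}(B) = \left(\frac{2}{\sqrt{|\Delta_K|}}\right)^2\sum_{\eII''}\mathcal{A}(\theta_9'(\eII'),\eI_8)\,V_{98}(\N\eI_1,\ldots,\N\eI_7;B) + O(B(\log B)^5),
\end{equation*}
as claimed.

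I expect the only step that needs real care to be the verification that the main-term summand is a function of the ideals $\eI_1,\ldots,\eI_7$ alone — this is what makes the passage from the sum over $\ee''$ to the sum over $\eII''$ legitimate and what produces exactly the factor $\numunits^7$ that cancels the prefactor. The bijectivity of the class correspondence coming from \eqref{eq:E6_def_Oj} and the (crude) bound on the accumulated error are routine bookkeeping.
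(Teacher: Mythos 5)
Your proof is correct and is essentially the paper's own argument: the paper disposes of this step by citing the analogous Lemma~9.4 of the reference arXiv:1302.6151, whose content is exactly what you spelled out — the main-term summand depends on $\ee''$ only through the ideals $\eI_1,\ldots,\eI_7$, each such ideal in the class $[\OO_j^{-1}]$ has precisely $\numunits$ generators, and \eqref{eq:E6_def_Oj} makes $\classtuple\mapsto([\OO_1^{-1}],\ldots,[\OO_7^{-1}])$ a bijection of $\classrep^7$ onto the seventh power of the class group, so the sums over $\classtuple$ and over generators cancel the prefactor $\numunits^{-7}$ and remove the class restrictions. The accumulated error $O(B(\log B)^2)$ from Lemmas \ref{lem:E6_passage_to_torsor} and \ref{lem:E6_second_summation} is then trivially absorbed into $O(B(\log B)^5)$, as you note.
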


\begin{proof}
  This is analogous to \cite[Lemma 9.4]{arXiv:1302.6151}.
\end{proof}

\subsection{The remaining summations}

\begin{lemma}\label{lem:E6_completion}
  We have
  \begin{multline*}
    N_{U,H}(B) = \left(\frac{2}{\sqrt{|\Delta_K|}}\right)^9
    \left(\frac{h_K}{\numunits}\right)^7 \prod_\p
    \left(1-\frac{1}{\N\p}\right)^7\left(1+\frac{7}{\N\p}+\frac{1}{\N\p^2}\right)
    V_0(B)\\ + O(B(\log B)^5\log \log B),
  \end{multline*}
  where
  \begin{equation*}
    V_0(B) := \int\limits_{\substack{(\e_1, \ldots, \e_9)\in\mathcal{R}(B)\\\abs{\e_1}, \ldots, \abs{\e_7} \ge 1}}\frac{1}{\abs{\e_4^2\e_5\e_7^3}}\dd \e_1 \cdots \dd \e_9,
  \end{equation*}
 with complex variables $\e_1, \ldots, \e_9$.
\end{lemma}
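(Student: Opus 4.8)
The plan is to carry out the remaining five summations over $\e_7, \e_6, \e_5, \e_4, \e_3, \e_2, \e_1$ (really over the ideals $\eI_7, \dots, \eI_1$), starting from the output of Lemma~\ref{lem:E6_second_summation_ideals}. Each summation replaces a sum over a single ideal $\eI_j$ by the corresponding integral over the complex variable $\e_j$, at the cost of an arithmetic factor that, after all summations, assembles into the product $\prod_\p (1-1/\N\p)^7(1+7/\N\p+1/\N\p^2)$ together with the powers of $2/\sqrt{|\Delta_K|}$ and $h_K/\numunits$. Concretely, I would first unfold the definition of $\mathcal{A}(\theta_9'(\eII'), \eI_8)$ and of $\theta_9'$ (hence of $\theta_0$, the local factors $\theta_{0,\p}$, and the correction $\phi_K^*(\eI_1\eI_3\eI_6)/\phi_K^*(\eI_6+\kc\eI_5)$), so that the summand is expressed via multiplicative functions of the ideals $\eI_1, \dots, \eI_7$ with controlled average orders; the key input here is that the relevant Dirichlet series factor as $\prod_\p$ of local factors that differ from $\zeta_K$-factors by convergent corrections, exactly as in the treatment over $\QQ$ in \cite{MR2332351} and over number fields in \cite{arXiv:1302.6151}.

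The core of the argument is an iterated application of a summation lemma of the shape ``replace $\sum_{\eI_j}(\text{arithmetic weight})\cdot V(\dots,\N\eI_j,\dots)$ by $(2/\sqrt{|\Delta_K|})\cdot(\text{leading constant})\cdot\int V(\dots,\abs{\e_j},\dots)\dd\e_j$ plus an error'', which over imaginary quadratic $K$ is the analogue of the classical device $\sum_{n\le x} f(n) = (\text{const})\int_0^x(\dots) + \text{error}$; this is available from \cite{arXiv:1302.6151} (the counterpart of its Sections~7--9). One applies it successively for $j = 7, 6, 5, 4, 3, 2$, and finally for $j=1$. At each stage the height conditions \eqref{eq:E6_height_1}--\eqref{eq:E6_height_4} (rewritten in terms of the $\N\eI_j$ and the region $\mathcal{R}(u_\classtuple B)$) constrain the summation range, the factor $1/\abs{\e_4^2\e_5\e_7^3}$ is carried along unchanged, and the conditions $\abs{\e_j}\ge 1$ appear because each $\eI_j$ ranges over nonzero integral ideals, i.e.\ $\N\eI_j\ge 1$. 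The main term that accumulates is $(2/\sqrt{|\Delta_K|})^9(h_K/\numunits)^7$ times the Euler product, multiplying $V_0(B)$; here the power $9$ counts the $7$ summations plus the two already present factors from Lemma~\ref{lem:E6_second_summation_ideals}, and $h_K^7$ enters because summing a multiplicative ideal function over \emph{all} nonzero ideals (rather than over one ideal class) produces the residue $2\pi h_K/(\sqrt{|\Delta_K|}\numunits)$-type constants, one per variable. One must also verify convergence of the infinite product and that $V_0(B)\asymp B(\log B)^6$, so that the accumulated power-saving and logarithmic error terms are genuinely of lower order.

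The main obstacle, and the reason for the $\log\log B$ in the error term, is the error-term bookkeeping across the seven nested summations: each application of the summation lemma introduces an error that is a sum over the remaining ideals of a divisor-type weight $2^{c\,\omega_K(\cdots)}$ against a power of $B$ divided by norms of the $\eI_j$, and one must check in every case that the exponents of $\N\eI_j$ are large enough (after using the height constraints to trade $B$ for norms) that the resulting sum is $O(B(\log B)^5\log\log B)$ or better. Borderline exponents — where a sum $\sum_{\N\eI\le T} 2^{c\omega_K(\eI)}/\N\eI$ contributes a power of $\log T$, or where truncating a tail sum $\sum_{\N\eI > T}(\dots)$ leaves a $\log\log$ — are what force the stated error, exactly as in \cite[Sections~8--9]{arXiv:1302.6151} and \cite{MR2332351}. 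So the bulk of the work is a careful but essentially routine verification that, variable by variable, the error exponents never cross the critical threshold except in the places that produce the advertised $B(\log B)^5\log\log B$; the structural identification of the main term is immediate from the multiplicativity once the local factors are written out.
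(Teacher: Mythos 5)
Your plan coincides in substance with the paper's proof: the paper carries out exactly this conversion of the seven-fold ideal sum from Lemma~\ref{lem:E6_second_summation_ideals} into $(2/\sqrt{|\Delta_K|})^7(h_K/\numunits)^7$ times the Euler product times the volume integral by invoking the packaged completion-of-summations result \cite[Proposition 7.3]{arXiv:1302.6151} with $r=6$ (plus polar coordinates), which is precisely the iterated-summation bookkeeping you describe, including the source of the $\log\log B$. The one substantive verification the paper makes — the uniform bound $V_{98}(t_1,\ldots,t_7;B)\ll \frac{B}{t_1\cdots t_7}\bigl(\frac{B}{t_1^2t_2^3t_3^4t_4^4t_5^5t_6^6t_7^3}\bigr)^{-1/6}$, obtained from \cite[Lemma 3.5, (5)]{arXiv:1302.6151} applied to \eqref{eq:E6_height_4} — is exactly what your ``trade $B$ for norms'' step must produce, and note that your ``five summations'' should read seven.
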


\begin{proof}
  By \cite[Lemma 3.5, (5)]{arXiv:1302.6151} applied to \eqref{eq:E6_height_4}, we have
  \begin{equation*}
    V_{98}(t_1, \ldots, t_7; B) \ll \frac{B^{5/6}}{t_1^{2/3}t_2^{1/2}t_3^{1/3}t_4^{1/3}t_5^{1/6}t_7^{1/2}}
    =\frac{B}{t_1 \cdots
      t_7}\left(\frac{B}{t_1^2t_2^3t_3^4t_4^4t_5^5t_6^6t_7^3}\right)^{-1/6}\text.
  \end{equation*}
  We apply \cite[Proposition 7.3]{arXiv:1302.6151} with $r=6$ and use polar coordinates.
\end{proof}

\section{Proof of the main theorem}\label{sec:proof_main}

Let $\alpha(\tS):=\frac{1}{6220800}$ and recall the definitions of
$\omega_\infty(\tS)$ from Theorem~\ref{thm:main} and $\mathcal{R}(B)$ from
\eqref{eq:E6_height_1}--\eqref{eq:E6_height_4}.

\begin{lemma}\label{lem:E6_predicted_volume}
  Define
  \begin{equation*}
    V_0'(B) := \int_{\substack{(\e_1, \ldots, \e_9) \in
        \mathcal{R}(B)\\\abs{\e_1}\text{, }\abs{\e_3}\text{, }\abs{\e_4}\text{,
        }\abs{\e_5}\text{, }\abs{\e_6}\text{, }\abs{\e_7}\geq
        1\\\abs{\e_1^2\e_3^4\e_4^4\e_5^5\e_6^6\e_7^3} \le
        B}}\frac{1}{\abs{\e_4^2\e_5\e_7^3}}\dd \e_1 \cdots \dd \e_9,
  \end{equation*}
  where $\e_1, \ldots, \e_9$ are complex variables. Then
  \begin{equation}\label{eq:E6_predicted_volume}
    \pi^7\alpha(\tS) \omega_\infty(\tS) B(\log B)^6 = 4 V_0'(B).
  \end{equation}
\end{lemma}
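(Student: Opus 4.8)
The plan is to evaluate the integral $V_0'(B)$ by peeling off the variables one at a time, starting with those that appear only in a single height condition, and to track how the powers of $\log B$ accumulate. First I would perform the $\e_9$-integration: in the region $\mathcal{R}(B)$ the variable $\e_9$ is constrained only by \eqref{eq:E6_height_3} (via $\abs{\e_1\e_2^2\e_3^2\e_4\e_5^2\e_6^3\e_9}\le B$) and by \eqref{eq:E6_height_4}. Since \eqref{eq:E6_height_4} only bounds $\abs{\e_1^2\e_3\e_8^3+\e_2\e_9^2}$, I would integrate over $\e_8$ and $\e_9$ together over the region cut out by these two conditions; this is a two-dimensional complex integral that can be computed explicitly (a disc-type integral after completing the relevant expression), producing a factor that is a power of $B$ times the contribution of $\e_2, \e_4, \e_5, \e_7$. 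The resulting expression should, after using the substitutions $z_0 = $ (monomial in the $\e_i$), etc., reproduce the integrand defining $\omega_\infty(\tS)$.

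Next I would change variables from $(\e_1,\e_3,\e_4,\e_5,\e_6,\e_7)$, all with $\abs{\e_j}\ge 1$, to the three ``monomial'' coordinates appearing in $\omega_\infty(\tS)$, namely those with $\abs{z_0z_1^2}, \abs{z_0^2z_1+z_2^3}, \abs{z_1^3}, \abs{z_1^2 z_2}\le 1$, together with three remaining ``free'' logarithmic variables. The key point is that the monomials in \eqref{eq:E6_height_1}--\eqref{eq:E6_height_4} defining $\mathcal{R}(B)$, restricted to the locus $\abs{\e_j}\ge 1$, correspond under the change of variables to a simplex-like region: six of the seven constraints (the six height conditions plus the extra condition $\abs{\e_1^2\e_3^4\e_4^4\e_5^5\e_6^6\e_7^3}\le B$, minus the one already used for $\e_8,\e_9$) become linear inequalities in the logarithms $\log\abs{\e_j}$, bounding a region of volume $\asymp (\log B)^6$. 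Integrating $1$ over this six-dimensional simplex gives $\frac{(\log B)^6}{6!}$ times the volume of the base region, where the $6!=720$ and the remaining numerical factors (powers of $2$ and $\pi$ from the complex Jacobians $\dd\e = 2\,\dd x\,\dd y$ in polar form, i.e. each complex integration contributes a factor $\pi$ over a disc of a given $\abs{\cdot}$-radius) must combine to match $\pi^7\alpha(\tS) = \pi^7/6220800$. One checks $6220800 = 720 \cdot 8640$ and that the leftover powers of $2$ and $\pi$ and the factor $\frac{12}{\pi}$ in the definition of $\omega_\infty(\tS)$ conspire to give exactly the factor $4$ on the right-hand side of \eqref{eq:E6_predicted_volume}.

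The main obstacle I anticipate is bookkeeping: making the change of variables precise, correctly identifying which six of the seven monomial inequalities are the ``active'' ones producing the $(\log B)^6$ (the seventh being redundant on the relevant locus, or else consumed by the $\e_8,\e_9$ integration), and tracking every factor of $2$, $\pi$, and the combinatorial $6!$ so that the constant $\alpha(\tS)=1/6220800$ and the normalization $\frac{12}{\pi}$ in $\omega_\infty(\tS)$ come out exactly, with the clean factor $4$. A secondary subtlety is justifying that the $\e_8,\e_9$-integral genuinely factors as a function of the monomial defining $z_0^2z_1+z_2^3$ in $\omega_\infty$, i.e. that the Jacobian of the passage to the $z_i$ coordinates is compatible with the remaining integrations; this should follow by a direct substitution $z_0 = \e_8 \cdot(\text{monomial})$, $z_1 = \e_9\cdot(\text{monomial})$, $z_2 = (\text{monomial})$ chosen so that the four bounds in $\omega_\infty(\tS)$ match \eqref{eq:E6_height_1}, \eqref{eq:E6_height_3}, \eqref{eq:E6_height_4} and the auxiliary bound in the definition of $V_0'(B)$, but verifying it requires care. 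Once the substitution is set up correctly, the rest is a routine (if lengthy) volume computation.
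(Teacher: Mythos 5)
Your overall plan---split $V_0'(B)$ into an inner integral that is a rescaled copy of $\omega_\infty(\tS)$ and an outer logarithmic integral producing $\alpha(\tS)(\log B)^6$---is the right one and is in spirit what the paper does, but your setup of the change of variables contains errors that would make the computation fail. The three variables that must be matched with $(z_0,z_1,z_2)$ are exactly $\e_9,\e_2,\e_8$, i.e.\ the ones carrying no lower bound $\abs{\cdot}\ge 1$ in $V_0'(B)$: for fixed $\e_1,\e_3,\e_4,\e_5,\e_6,\e_7$ one puts $l=(B\abs{\e_1\e_3^2\e_4^5\e_5^4\e_6^3\e_7^6})^{1/2}$ and $z_0=l^{-1/3}\e_9$, $z_1=l^{-1/3}\e_1\e_3^2\e_4^3\e_5^3\e_6^3\e_7^3\e_2$, $z_2=l^{-1/3}\e_1\e_3\e_4\e_5\e_6\e_7\e_8$; then the four bounds defining $\omega_\infty(\tS)$ become precisely \eqref{eq:E6_height_1}--\eqref{eq:E6_height_4}, and the three-fold integral over $(\e_2,\e_8,\e_9)$ equals $\frac{\pi}{12}\,\frac{B}{\abs{\e_1\e_3\e_4\e_5\e_6\e_7}}\,\omega_\infty(\tS)$ --- no explicit evaluation is possible or needed. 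Your proposal instead integrates out only $\e_8,\e_9$ ``explicitly'' using just \eqref{eq:E6_height_3}--\eqref{eq:E6_height_4} (overlooking that \eqref{eq:E6_height_2} also constrains $\e_8$), and your later substitution guess $z_0=\e_8\cdot(\text{monomial})$, $z_1=\e_9\cdot(\text{monomial})$, $z_2=(\text{monomial})$ assigns no free integration variable to $z_2$ and leaves $\e_2$ among the outer variables; with that assignment the three-dimensional integral $\omega_\infty(\tS)$ cannot be reproduced, and since $\e_2$ has no lower bound its integration cannot contribute a logarithm.

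Your outer step is also dimensionally inconsistent: you propose to convert three of $\e_1,\e_3,\e_4,\e_5,\e_6,\e_7$ into monomial coordinates and keep only ``three remaining free logarithmic variables'', yet you then integrate over a ``six-dimensional simplex'' to get $(\log B)^6/6!$. In fact all six of $\e_1,\e_3,\e_4,\e_5,\e_6,\e_7$ remain as outer variables; once the inner integral is absorbed, the outer region is exactly $\abs{\e_j}\ge1$ together with the auxiliary condition $\abs{\e_1^2\e_3^4\e_4^4\e_5^5\e_6^6\e_7^3}\le B$ (none of the height conditions survives, so there is no question of choosing ``six active inequalities''), and $\int \dd\e_1\dd\e_3\cdots\dd\e_7/\abs{\e_1\e_3\e_4\e_5\e_6\e_7}$ over it equals $\pi^6$ times the volume of the simplex $x_j\ge0$, $2x_1+4x_3+4x_4+5x_5+6x_6+3x_7\le\log B$, namely $\pi^6(\log B)^6/(6!\cdot 2880)$, which is identified with $3\pi^6\alpha(\tS)(\log B)^6$ (the paper simply quotes \cite[Lemma~8.1]{arXiv:1302.6151} here; your factorization $6220800=720\cdot 8640$ is asserted rather than derived and conceals the decomposition $8640=2880\cdot 3$, where $2880=2\cdot4\cdot4\cdot5\cdot6\cdot3$). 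Combining $\frac{\pi}{12}\cdot 3\pi^6$ yields $\frac{\pi^7}{4}$, i.e.\ the factor $4$ in \eqref{eq:E6_predicted_volume}. So the guiding idea is correct, but as written the variable bookkeeping fails at the two decisive points, and the constant matching is left as an unverified claim.
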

\begin{proof}
  Let $\e_1,\e_3,\e_4,\e_5,\e_6,\e_7 \in \CC\smallsetminus\{0\}$, $B>0$, and $l := (B
  \abs{\e_1\e_3^2\e_4^5\e_5^4\e_6^3\e_7^6})^{1/2}$. Let $\e_2, \e_8,
  \e_9$ be complex variables. We apply the coordinate transformation
  $z_0 = l^{-1/3}\cdot \e_9$, $z_1 =
  l^{-1/3}\e_1\e_3^2\e_4^3\e_5^3\e_6^3\e_7^3\cdot \e_2$, $z_2 = l^{-1/3}\cdot \e_1\e_3\e_4\e_5\e_6\e_7\cdot \e_8$
  to $\omega_\infty(\tS)$ and obtain
  \begin{equation}\label{eq:E6_complex_density_torsor}
    \omega_\infty(\tS) = \frac{12}{\pi}\frac{\abs{\e_1\e_3\e_4\e_5\e_6\e_7}}{B}
    \int_{(\e_1, \ldots, \e_9)\in\mathcal{R}(B)}\frac{1}{\abs{\e_4^2\e_5\e_7^3}}\dd \e_2 \dd \e_8 \dd \e_9\text.
  \end{equation}

Since the negative curves $[E_1], \dots, [E_7]$ generate the effective
cone of $\tS$, and $[-K_\tS] = [2E_1+3E_2+4E_3+4E_4+5E_5+6E_6+3E_7]$,
\cite[Lemma~8.1]{arXiv:1302.6151} gives
\begin{equation}\label{eq:E6_alpha}
  \alpha(\tS)(\log B)^6 = \frac{1}{3\pi^6}\int_{\substack{\abs{\e_1}, \abs{\e_3}, \dots, \abs{\e_7} \ge 1\\\abs{\e_1^2\e_3^4\e_4^4\e_5^5\e_6^6\e_7^3}\le B}} \frac{\dd\e_1 \dd\e_3 \dd\e_4 \dd\e_5 \dd\e_6 \dd\e_7}{\abs{\e_1\e_3\e_4\e_5\e_6\e_7}}.
\end{equation}
  The lemma follows by substituting
  \eqref{eq:E6_complex_density_torsor} and \eqref{eq:E6_alpha} in
  \eqref{eq:E6_predicted_volume}.
\end{proof}

To complete the proof of Theorem~\ref{thm:main}, we compare $V_0(B)$ defined in Lemma
\ref{lem:E6_completion} with $V_0'(B)$ defined in Lemma
\ref{lem:E6_predicted_volume}. Starting from $V_0(B)$, we can add the
condition $\abs{\e_1^2\e_3^4\e_4^4\e_5^5\e_6^6\e_7^3} \le B$ and
remove $\abs{\e_2} \ge 1$ with negligible error. Indeed, adding the
condition $\abs{\e_1^2\e_3^4\e_4^4\e_5^5\e_6^6\e_7^3} \le B$ to the
domain of integration for $V_0(B)$ does not change the result. Using
\cite[Lemma 3.5, (3)]{arXiv:1302.6151} applied to
\eqref{eq:E6_height_4} to bound the integral over $\e_8, \e_9$, we see
that $V_0'(B)-V_0(B)$ is
\begin{equation*}
  \ll \int_{\substack{\abs{\e_1},\abs{\e_3}, \dots, \abs{\e_7} \ge 1,\ \abs{\e_2} < 1\\\abs{\e_1^2\e_3^4\e_4^4\e_5^5\e_6^6\e_7^3} \le B}} \frac{B^{5/6}}{\abs{\e_1^4\e_2^3\e_3^2\e_4^2\e_5\e_7^3}^{1/6}}\dd \e_1\cdots\dd \e_7 \ll B(\log B)^5.
\end{equation*}
Using Lemma~\ref{lem:E6_completion} and Lemma~\ref{lem:E6_predicted_volume},
this implies Theorem \ref{thm:main}.

\bibliographystyle{alpha}

\bibliography{manin_dp3_e6_imag_quad}

\end{document}